 \newtheoremstyle{mytheorem}
 {3pt}
 {3pt}
 {\slshape}
 {}
 {\bfseries}
 {.}
 { }
 {}
\numberwithin{equation}{section}
\theoremstyle{theorem}
\newtheorem{theorem}{Theorem}[section]
\newtheorem{lemma}[theorem]{Lemma}
\newtheorem{proposition}[theorem]{Proposition}
\theoremstyle{definition}
\newtheorem*{example*}{Example}
\theoremstyle{remark}
\newtheorem{remark}{Remark}[section]
\newtheorem*{remark*}{Remark}
\newtheorem*{remarks*}{Remarks}
\newcommand{\arxiv}[1]{\href{http://arxiv.org/abs/#1}{arXiv:#1}}
\newcommand{\Keywords}[1]{\ifthenelse{\isempty{#1}}{}{\smallskip \smallskip \noindent \textbf{Keywords}. #1}}
\newcommand{\MSC}[2][2010]{\ifthenelse{\isempty{#2}}{}{\smallskip \smallskip \noindent \textbf{#1MSC}. #2}}
\newcommand{\abstractnote}[1]{\ifthenelse{\isempty{#1}}{}{\smallskip \smallskip \noindent \textsuperscript{\dag}#1}}
\def\specialsection{\@startsection{section}{1}%
  \z@{\linespacing\@plus\linespacing}{.5\linespacing}%
  {\normalfont}}
\def\section{\@startsection{section}{1}%
  \z@{.7\linespacing\@plus\linespacing}{.5\linespacing}%
  {\normalfont\scshape}}
\patchcmd{\@settitle}{\uppercasenonmath\@title}{\Large\boldmath}{}{}
\patchcmd{\@settitle}{\begin{center}}{\begin{flushleft}}{}{}
\patchcmd{\@settitle}{\end{center}}{\end{flushleft}}{}{}
\patchcmd{\@setauthors}{\MakeUppercase}{\normalsize}{}{}
\patchcmd{\@setauthors}{\centering}{\raggedright}{}{}
\patchcmd{\section}{\scshape}{\large\bfseries\boldmath}{}{}
\patchcmd{\subsection}{\bfseries}{\bfseries\boldmath}{}{}
\renewcommand{\@secnumfont}{\bfseries}
\patchcmd{\@startsection}{\@afterindenttrue}{\@afterindentfalse}{}{}
\patchcmd{\abstract}{\leftmargin3pc}{\leftmargin1pc}{}{}
\def\maketitle{\par
  \@topnum\z@ 
  \@setcopyright
  \thispagestyle{empty}
  \ifx\@empty\shortauthors \let\shortauthors\shorttitle
  \else \andify\shortauthors
  \fi
  \@maketitle@hook
  \begingroup
  \@maketitle
  \toks@\@xp{\shortauthors}\@temptokena\@xp{\shorttitle}%
  \toks4{\def\\{ \ignorespaces}}
  \edef\@tempa{%
    \@nx\markboth{\the\toks4
      \@nx\MakeUppercase{\the\toks@}}{\the\@temptokena}}%
  \@tempa
  \endgroup
  \c@footnote\z@
  \@cleartopmattertags
}
\newcommand{\rD}{\textup{\foreignlanguage{russian}{D}}}
\newcommand{\rZh}{\textup{\foreignlanguage{russian}{Zh}}}
\newcommand{\fC}{\mathbf{C}}
\newcommand{\fH}{\mathfrak{H}}
\newcommand{\vs}{\varsigma}
\newcommand{\vk}{\varkappa}
\newcommand{\cJ}{\mathcal{J}}
\newcommand{\cL}{\mathcal{L}}
\newcommand{\myceil}[1]{\left\lceil #1 \right\rceil}
\newcommand{\myfloor}[1]{\left\lfloor #1 \right\rfloor}
\newcommand{\bfm}{\mathbf{m}}
\newcommand{\bfr}{\mathbf{r}}
\newcommand{\bfd}{\boldsymbol{\updelta}}
\newcommand{\lcm}{\mathrm{lcm}}
\newcommand{\vast}{\bBigg@{4}}
\newcommand{\Vast}{\bBigg@{5}}
\title{Asymptotics for the Taylor coefficients of certain infinite products}
\author[S. Chern]{Shane Chern}
\address{Department of Mathematics, The Pennsylvania State University, University Park, PA 16802, USA}
\email{shanechern@psu.edu}
\date{}
\begin{document}

{\footnotesize\noindent \textit{Preprint} (2019). Available at \arxiv{1902.10839}.}

\bigskip \bigskip

\maketitle

\begin{abstract}

Let $(m_1,\ldots,m_J)$ and $(r_1,\ldots,r_J)$ be two sequences of $J$ positive integers satisfying $1\le r_j< m_j$ for all $j=1,\ldots,J$. Let $(\delta_1,\ldots,\delta_J)$ be a sequence of $J$ nonzero integers. In this paper, we study the asymptotic behavior of the Taylor coefficients of the infinite product
$$\prod_{j=1}^J\Bigg(\prod_{k\ge 1}\big(1-q^{r_j+m_j(k-1)}\big)\big(1-q^{-r_j+m_jk}\big)\Bigg)^{\delta_j}.$$

\Keywords{Infinite product, Taylor coefficient, asymptotics, circle method.}

\MSC{11P55.}
\end{abstract}


\section{Introduction}

\subsection{Motivations}

For complex variables $\alpha$ and $q$ with $|q|<1$, we denote
$$(\alpha;q)_n:=\prod_{k= 0}^{n-1}(1-\alpha q^k)\quad\text{and}\quad(\alpha;q)_\infty:=\prod_{k\ge 0}(1-\alpha q^k).$$
We also use the notation
$$(\alpha,\beta,\ldots,\gamma;q)_\infty:=(\alpha;q)_\infty(\beta;q)_\infty\cdots(\gamma;q)_\infty.$$

\medskip

Let $p(n)$ be the number of partitions of $n$; that is, the number of representations of $n$ written as a sum of a non-increasing sequence of positive integers. It is well known that $p(n)$ has the generating function
$$\sum_{n\ge 0}p(n)q^n=\frac{1}{(q;q)_\infty}.$$
The study of the asymptotic behavior of $p(n)$ originates from Hardy and Ramanujan \cite{HR1917}. A couple of decades later, Rademacher \cite{Rad1937} further proved the following formula
\begin{align}
p(n)=\frac{1}{2\sqrt{2} \pi}\sum_{k\ge 1}A_k(n)\sqrt{k}\;\frac{d}{dn}\left(\frac{2}{\sqrt{n-\frac{1}{24}}}\sinh\left(\frac{\pi}{k}\sqrt{\frac{2}{3}\left(n-\frac{1}{24}\right)}\right)\right),
\end{align}
where
$$A_k(n)=\sum_{\substack{0\le h< k\\ \gcd(h,k)=1}}e^{\pi i(s(h,k)-2nh/k)}$$
with $s(h,k)$ being the Dedekind sum defined in \eqref{eq:dedekind-sum}.

Apart from ordinary partitions, partitions under symmetric congruence conditions also attract broad research interest. The most famous examples arise from the Rogers--Ramanujan identities (Rogers \cite{Rog1894}, Ramanujan \cite{Ram1919}). Here the first Rogers--Ramanujan identity states that (cf.~Corollary 7.67 in \cite{And1976})
$$\frac{1}{(q,q^4;q^5)_\infty}=\sum_{n\ge 0}\frac{q^{n^2}}{(q;q)_n}.$$
Using the language in partition theory, the above identity can be restated as follows. The number of partitions of $n$ such that each part is congruent to $\pm 1$ modulo $5$ equals the number of partitions of $n$ such that the adjacent parts differ by at least two. Let $p_{5,\pm 1}(n)$ be the number of partitions of $n$ such that each part is congruent to $\pm 1$ modulo $5$. Its asymptotic formula was shown by Lehner \cite{Leh1941}:
\begin{align}\label{eq:51}
p_{5,\pm 1}(n)\sim \frac{\csc(\pi/5)}{4\cdot 3^{1/4}\cdot 5^{1/4}}n^{-3/4}\exp\Bigg(2\pi\sqrt{\frac{n}{15}}\Bigg).
\end{align}
The interested reader may also refer to Niven \cite{Niv1940}, Livingood \cite{Liv1945}, Petersson \cite{Pet1954,Pet1956}, Grosswald \cite{Gro1958}, Iseki \cite{Ise1959,Ise1960,Ise1961}, Hagis Jr.~\cite{Hag1962}, Subrahmanyasastri \cite{Sub1972} and so forth for the asymptotic behaviors of other partition functions under symmetric congruence conditions.

As we have seen, the generating function of $p_{5,\pm 1}(n)$ is indeed an infinite product under a symmetric congruence condition. Further, similar infinite products are also of number-theoretic interest. One example is the Rogers--Ramanujan continued fraction defined by
$$R(q) := \cfrac{q^{1/5}}{1 + \cfrac{q}{1 + \cfrac{q^{2}}{1 + \cfrac{q^{3}}{1 + \cdots}}}}=q^{1/5}\frac{(q,q^4;q^5)_\infty}{(q^2,q^3;q^5)_\infty}.$$
Let us focus on the infinite product part in $R(q)$ and write
$$\sum_{n\ge 0}C(n)q^n=\frac{(q,q^4;q^5)_\infty}{(q^2,q^3;q^5)_\infty}.$$
It is known from Richmond and Szekeres \cite{RS1978} that
\begin{align}\label{eq:C(n)}
C(n)\sim \frac{2^{1/2}}{5^{3/4}}\cos\Bigg(\frac{4\pi}{5}\bigg(n+\frac{3}{20}\bigg)\Bigg)n^{-3/4}\exp\Bigg(\frac{4\pi}{5}\sqrt{\frac{n}{5}}\Bigg).
\end{align}
Hence for sufficiently large $n$, $C(5n+0,2)>0$ and $C(5n+1,3,4)<0$. We also remark that in \cite{RS1978}, Richmond and Szekeres indeed studied the asymptotic behavior of the Taylor coefficients of the general infinite product
$$\prod_{j=1}^{m-1}(q^j;q^m)^{-\zeta \chi(j)}$$
where $m$ is a positive fundamental discriminant, $\chi(j)=(m | j)$ is the Kronecker symbol and $\zeta$ is either $1$ or $-1$.

Recently, there are a number of papers \cite{AG1994,AB1979,Hir2018,McL2015,Tan2018} studying vanishing Taylor coefficients of certain infinite products. For instance, Tang \cite{Tan2018} showed that the Taylor coefficients of
$$\sum_{n\ge 0} B(n)q^n=(-q^2,-q^3;q^5)_\infty^2(q^2,q^8;q^{10})_\infty=\frac{(q^2,q^8;q^{10})_\infty(q^4;q^6;q^{10})_\infty^2}{(q^2,q^3;q^5)_\infty^2}$$
satisfy $B(5n+1)=0$ for all $n\ge 0$. At the end of Tang's paper, he also provided numerical evidence of the inequalities $B(5n+0,2,3)>0$ and $B(5n+4)<0$ for sufficiently large $n$. Similar numerical evidences are also provided for inequalities of Taylor coefficients of other infinite products.

\medskip

Motivated by these work, it is natural to investigate a broad family of infinite products. Let $\bfm=(m_1,\ldots,m_J)$ and $\bfr=(r_1,\ldots,r_J)$ be two sequences of $J$ positive integers satisfying $1\le r_j< m_j$ for all $j=1,\ldots,J$. Let $\bfd=(\delta_1,\ldots,\delta_J)$ be a sequence of $J$ nonzero integers. In this paper, we shall study the asymptotics for the Taylor coefficients of the following infinite product
\begin{equation}
\sum_{n\ge 0}g(n)q^n=\prod_{j=1}^J (q^{r_j},q^{m_j-r_j};q^{m_j})_\infty^{\delta_j}.
\end{equation}

\subsection{Notation and main result}

Let $\fC$ be the set of complex numbers and $\fH$ be the upper half complex plane. Let $\gcd$ and $\lcm$ be the greatest common divisor function and least common multiple function, respectively. For a positive integer $n$, we accept the convention that $\gcd(0,n)=n$.

We define the big-$O$ notation as usual: $f(x)=O(g(x))$ means that $|f(x)|\le C g(x)$ where $C$ is an absolute constant. Furthermore, $f(x)\ll g(x)$ means that $f(x)=O(g(x))$. Throughout this paper, we always assume that the constant $C$ depends on $\bfm$, $\bfr$ and $\bfd$ unless otherwise stated.

\medskip

Below we assume that $0\le h<k$ are integers such that $\gcd(h,k)=1$. Let us define auxiliary functions
$$\lambda_{m,r}(h,k):=\myceil{\frac{rh}{\gcd(m,k)}}$$
and
$$\lambda^*_{m,r}(h,k):=\lambda_{m,r}(h,k)-\frac{rh}{\gcd(m,k)}.$$
We also put $\hbar_m(h,k)$ an integer such that
$$\hbar_m(h,k) \frac{m h}{\gcd(m,k)}\equiv -1 \pmod{\frac{k}{\gcd(m,k)}}.$$
Notice that one may always find such an integer since $\gcd(h,k)=1$.

\medskip

Next, we define
$$\Omega:=\sum_{j=1}^J\delta_j \Bigg(2m_j-12r_j+\frac{12r_j^2}{m_j}\Bigg),$$
$$\Delta(h,k):=-\sum_{j=1}^J\delta_j \Bigg(\frac{2\gcd^2(m_j,k)}{m_j}+\frac{12\gcd^2(m_j,k)}{m_j}({\lambda^*}^2_{\!\!\!m_j,r_j}(h,k)-\lambda^*_{m_j,r_j}(h,k))\Bigg)$$
and
\begin{align}
\omega_{h,k}:=\exp\left(-\pi i\sum_{j=1}^J \delta_j\cdot s\left(\frac{m_j h}{\gcd(m_j,k)},\frac{k}{\gcd(m_j, k)}\right)\right),
\end{align}
where $s(d,c)$ is the Dedekind sum defined by
\begin{equation}\label{eq:dedekind-sum}
s(d,c):=\sum_{n \bmod{c}} \bigg(\!\!\bigg(\frac{dn}{c}\bigg)\!\!\bigg)\bigg(\!\!\bigg(\frac{n}{c}\bigg)\!\!\bigg)
\end{equation}
with
$$(\!(x)\!):=\begin{cases}
x-\lfloor x\rfloor -1/2 & \text{if $x\not\in \mathbb{Z}$},\\
0 & \text{if $x\in \mathbb{Z}$}.
\end{cases}$$
We also define
\begin{align*}
\rD_{h,k}&:=\exp\Bigg(\pi i \sum_{j=1}^J \delta_j\bigg(\frac{r_j h}{k}-\frac{r_j \gcd(m_j,k)}{m_j k}+\frac{2r_j \gcd(m_j,k)\lambda^*_{m_j,r_j}(h,k)}{m_j k}\\
&\ \quad+\frac{\hbar_{m_j}(h,k)\gcd(m_j,k)}{k}(\lambda^2_{m_j,r_j}(h,k)-\lambda_{m_j,r_j}(h,k))\bigg)\Bigg).
\end{align*}
One readily verifies that the choice of $\hbar_m(h,k)$ does not affect the value of $\rD_{h,k}$. At last, we define
\begin{align*}
\Pi_{h,k}:=\begin{cases}
\displaystyle\prod_{j:\lambda^*_{m_j,r_j}(h,k)=0} \Bigg(1-\exp\bigg(2\pi i\frac{r_{j}\gcd(m_{j},k)+r_{j}\hbar_{m_{j}}(h,k)m_{j}h}{m_{j}k}\bigg)\Bigg)^{\delta_j} \\\hfill\text{{if there exists $j$ such that $\lambda^*_{m_j,r_j}(h,k)=0$}},\\
\quad\quad\ \ 1 \\\hfill\text{otherwise}.
\end{cases}
\end{align*}
Remark \ref{rmk:Pi-value} tells us that the choice of $\hbar_m(h,k)$ also does not affect the value of $\Pi_{h,k}$. Also, Proposition \ref{lem:pre} indicates that for any $j$ with $\lambda^*_{m_j,r_j}(h,k)=0$, we have
$$1-\exp\bigg(2\pi i\frac{r_{j}\gcd(m_{j},k)+r_{j}\hbar_{m_{j}}(h,k)m_{j}h}{m_{j}k}\bigg) \ne 0.$$
Hence the value $\Pi_{h,k}$ is well-defined and $\Pi_{h,k}\ne 0$.

\medskip

Given a real $0\le x<1$, we define
$$\Upsilon(x):=\begin{cases}
1 & \text{if $x=0$},\\
x & \text{if $0<x\le 1/2$},\\
1-x & \text{if $1/2<x<1$}.
\end{cases}$$

\medskip

Let $L=\lcm(m_1,\ldots,m_R)$. We define two disjoint sets:
\begin{align*}
\cL_{>0}&:=\{(\vk,\ell) \; :\; 1\le \ell\le L,\; 0\le \vk<\ell,\; \Delta(\vk,\ell)>0\},\\
\cL_{\le 0}&:=\{(\vk,\ell) \; :\; 1\le \ell\le L,\; 0\le \vk<\ell,\; \Delta(\vk,\ell)\le 0\}.
\end{align*}

\medskip

Our main result states as follows.

\begin{theorem}
If the inequality
\begin{equation}\label{eq:assump}
\min_{1\le j\le J}\left(\Upsilon\big(\lambda^*_{m_j,r_j}(\vk,\ell)\big)\frac{\gcd^2(m_j,\ell)}{m_j}\right)\ge \frac{\Delta(\vk,\ell)}{24}
\end{equation}
holds for all $1\le \ell \le L$ and $0\le \vk<\ell$, then for positive integers $n>-\Omega/24$, we have
\begin{align}
g(n)&=E(n)+2\pi i^{\sum_{j=1}^{J}\delta_j}\underset{(\vk,\ell)\in\cL_{>0}}{\sum_{1\le \ell\le L}\sum_{0\le \vk< \ell}} \left(\frac{24n+\Omega}{\Delta(\vk,\ell)}\right)^{-\frac{1}{2}}\notag\\
&\quad\quad\quad\quad\quad\quad\quad\quad\quad\times \sum_{\substack{1\le k\le N^\star\\k \equiv \ell \bmod{L}}} \frac{1}{k}  I_{-1}\left(\frac{\pi }{6k}\sqrt{\Delta(\vk,\ell)(24n+\Omega)}\right)\notag\\
&\quad\quad\quad\quad\quad\quad\quad\quad\quad\times \sum_{\substack{0\le h< k\\ \gcd(h,k)=1\\ h\equiv \vk \bmod{\ell}}} e^{-\frac{2\pi i nh}{k}}(-1)^{\sum_{j=1}^{J}\delta_j \lambda_{m_j,r_j}(h,k)}\omega_{h,k}^2\;\rD_{h,k}\Pi_{h,k},\label{eq:main-result}
\end{align}
where
\begin{equation}
N^\star=\myfloor{\sqrt{2\pi \left(n+\frac{\Omega}{24}\right)}},
\end{equation}
$I_s(x)$ is the modified Bessel function of the first kind, and
\begin{equation}
E(n)\ll_{\bfm,\bfr,\bfd} 1.
\end{equation}
\end{theorem}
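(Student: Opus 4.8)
The plan is to follow the Hardy--Ramanujan--Rademacher circle method in its classical form, as used by Lehner, Iseki, and Subrahmanyasastri for partition functions under congruence conditions. Write $F(q) = \sum_{n\ge 0} g(n)q^n = \prod_{j=1}^J (q^{r_j},q^{m_j-r_j};q^{m_j})_\infty^{\delta_j}$, which is analytic on $|q|<1$. By Cauchy's theorem, $g(n) = \frac{1}{2\pi i}\oint F(q)q^{-n-1}\,dq$ over a circle $|q| = e^{-2\pi/N^2}$ for a suitably chosen $N$ (scaling with $\sqrt n$). The first main step is to dissect this circle into Farey arcs indexed by fractions $h/k$ with $k\le N$, $\gcd(h,k)=1$, and on each arc substitute $q = \exp(2\pi i h/k - 2\pi z/k^2)$ where $z$ ranges over a short segment with $\mathrm{Re}\,z>0$. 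Then $q\to e^{2\pi i h/k}$ radially corresponds to $z\to 0$, i.e.\ to the cusp $h/k$ of the relevant congruence subgroup.

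The second, and technically central, step is to obtain the precise transformation behavior of $F$ near each root of unity $e^{2\pi i h/k}$. Each factor $(q^{r_j},q^{m_j-r_j};q^{m_j})_\infty$ is, up to an elementary prefactor, a quotient of Dedekind eta functions: indeed $(q^{r},q^{m-r};q^{m})_\infty = (q^m;q^m)_\infty / \big((q^r;q^m)_\infty (q^{m-r};q^m)_\infty\big)^{-1}$ can be rewritten through $(q^a;q^b)_\infty$ and hence through $\eta$ via $q^{-a^2/(2b)+\cdots}\eta(\text{linear in }\tau)$; more directly one uses the generalized Dedekind eta function $\eta_{m,r}(\tau)$ (a classical object with known modular transformations, cf.\ the functions appearing in Subrahmanyasastri's and Iseki's work). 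Applying the modular transformation $\tau\mapsto (h+iz)/k \mapsto (h'+i/z)/k$ for the appropriate matrix in $\mathrm{SL}_2(\mathbb Z)$, and collecting the Dedekind-sum phase factors, the ceiling functions $\lambda_{m_j,r_j}(h,k)$, the fractional parts $\lambda^*_{m_j,r_j}(h,k)$, and the modular inverse $\hbar_{m_j}(h,k)$, one arrives at an identity of the shape
\begin{align*}
F\!\left(e^{2\pi i h/k - 2\pi z/k^2}\right) = (-1)^{\sum_j\delta_j\lambda_{m_j,r_j}(h,k)}\,\omega_{h,k}^2\,\rD_{h,k}\,\Pi_{h,k}\;z^{?}\,\exp\!\left(\frac{\pi\Delta(h,k)}{12 k^2 z} - \frac{\pi\Omega z}{12}\right)\big(1+o(1)\big),
\end{align*}
where the exponential factors are exactly what give $\Omega$ and $\Delta(h,k)$ their stated definitions (from the $q^0$--order term of the exponential singularity, which for an eta quotient reads off as $\frac{1}{24}$ times a weighted sum of the relevant levels and residues, transformed), and $\Pi_{h,k}$ appears precisely from those factors $j$ for which the transformed nome tends to $1$ rather than $0$ (the case $\lambda^*_{m_j,r_j}(h,k)=0$), which contribute a finite nonzero limit rather than an exponential. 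Assumption~\eqref{eq:assump} is what guarantees that among all cusps the dominant exponential growth is controlled by $\Delta(\vk,\ell)$, i.e.\ that no ``error'' factor $(q^{r_j+\cdots};\cdots)_\infty$ in the transformed product blows up faster than the main term allows; this is the hypothesis that keeps the non-principal parts genuinely subordinate.

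The third step assembles the arc contributions. On arcs where $\Delta(h,k)>0$ (equivalently $(h\bmod k)=\vk$, $k\equiv\ell\bmod L$ with $(\vk,\ell)\in\cL_{>0}$) the integrand has a genuine exponential peak; one extends the short arc integral to a Hankel-type contour (the standard Rademacher trick), incurring only negligible error by Assumption~\eqref{eq:assump}, and recognizes the resulting integral
\begin{align*}
\frac{1}{2\pi i}\int_{\text{Hankel}} z^{s}\exp\!\left(\frac{A}{z}+Bz\right) dz
\end{align*}
as a modified Bessel function $I_{\nu}$; with the weight exponent $s$ coming out of the eta-transformation the order is $\nu=-1$, and the argument is $\frac{\pi}{6k}\sqrt{\Delta(\vk,\ell)(24n+\Omega)}$, matching \eqref{eq:main-result}; the prefactor $\big((24n+\Omega)/\Delta(\vk,\ell)\big)^{-1/2}$ and the global constant $2\pi i^{\sum_j\delta_j}$ drop out of this evaluation together with the $\omega_{h,k}^2$, $\rD_{h,k}$, $\Pi_{h,k}$ coefficients. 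On arcs with $\Delta(h,k)\le 0$ the same transformation shows the integrand is bounded (no exponential growth), so summing over all such arcs — there are $O(N^2)$ of them, each of length $\ll 1/(kN)$ — gives a total contribution that is exponentially smaller than the main term, hence absorbed. The convergence of the resulting infinite sum over $k$ (for fixed $(\vk,\ell)$) is then handled exactly as in Rademacher's proof for $p(n)$: the Bessel factor decays like $\exp(c\sqrt\Delta/k)$ while the inner Kloosterman-type sum over $h$ is $O(k)$, yielding absolute convergence.

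I expect the main obstacle to be the second step: tracking \emph{exactly} — not just up to order of magnitude — all the phase and prefactor data through the eta transformation for a product of $J$ generalized eta quotients with arbitrary residues $r_j$ and moduli $m_j$. The Dedekind-sum reciprocity combined with the ceiling/fractional-part bookkeeping (which is why $\lambda_{m_j,r_j}$, $\lambda^*_{m_j,r_j}$, $\hbar_{m_j}$, and $\Upsilon$ are introduced) is delicate, and getting the signs, the factor $\rD_{h,k}$, and especially the degenerate factor $\Pi_{h,k}$ right — including verifying the claimed independence of the choice of $\hbar_m(h,k)$ and the nonvanishing asserted after Proposition~\ref{lem:pre} — requires care. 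Everything downstream (the Hankel contour, the Bessel identification, the error estimates, the convergence) is by now routine in the circle-method literature; the novelty and the work lie in the uniform transformation formula and in verifying that Assumption~\eqref{eq:assump} suffices to make the circle method go through.
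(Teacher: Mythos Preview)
Your proposal is correct and follows essentially the same route as the paper: Cauchy's formula, Farey dissection, a modular transformation for each factor $(q^{r_j},q^{m_j-r_j};q^{m_j})_\infty$ (the paper packages this as $\rZh(\vs;\tau)=ie^{-\pi i\tau/6}e^{\pi i\vs}\vartheta(\vs;\tau)/\eta(\tau)$ rather than generalized eta functions, but the content is the same), the split by the sign of $\Delta(\vk,\ell)$, and the Hankel-contour evaluation yielding $I_{-1}$. One small correction: because $\rZh$ has weight $0$ (the $\vartheta$ and $\eta$ half-integer weights cancel), there is no ``$z^{?}$'' factor in the transformation formula, and the Bessel order $-1$ arises purely from the integral $\int e^{A/z+Bz}\,dz$ itself, not from an eta-weight.
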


\begin{remark}
To better understand the asymptotic behavior of $g(n)$, one may apply the asymptotic expansion of $I_s(x)$ (cf.~\cite[p.~377, (9.7.1)]{AA1972}): for fixed $s$, when $|\arg x|<\frac{\pi}{2}$,
\begin{align}\label{Bessel-order}
I_{s}(x)\sim \frac{e^x}{\sqrt{2\pi x}}\left(1-\frac{4s^2-1}{8x}+\frac{(4s^2-1)(4s^2-9)}{2!(8x)^2}-\cdots \right).
\end{align}
\end{remark}

\section{Applications of the main result}

Before moving to the proof of the main result, we first give some applications. In the first two examples, we reproduce the asymptotic formulas \eqref{eq:51} and \eqref{eq:C(n)}, respectively. We then confirm Tang's inequalities in \cite{Tan2018} in the asymptotic sense. In this section, we always expand the infinite product as $\sum_{n\ge 0}g(n)q^n$.

In general, to obtain an explicit asymptotic formula of $g(n)$, we first compute $\cL_{>0}$. Next, we find the largest number among $\{\sqrt{\Delta(\vk,\ell)}/k\}$ with $(\vk,\ell)\in\cL_{>0}$ and $k\equiv \ell \pmod{L}$. Now one needs to check if the corresponding $I$-Bessel function vanishes for this choice. If it is nonvanishing, then the asymptotic formula shall be obtained from the $I$-Bessel term. Otherwise, we move to find the second largest number among $\{\sqrt{\Delta(\vk,\ell)}/k\}$ and carry out the same program. Notice that if there are multiple choices of $\vk$, $\ell$ and $k$ giving the same value of $\sqrt{\Delta(\vk,\ell)}/k$, one should sum up all such $I$-Bessel terms and check if the summation vanishes or not.

\subsection{Partitions into parts congruent to $\pm 1$ modulo $5$}

Let
$$\sum_{n\ge 0}g(n)q^n=\frac{1}{(q,q^4;q^5)_\infty}.$$
Then $\bfm=\{5\}$, $\bfr=\{1\}$ and $\bfd=\{-1\}$. Hence $L=5$ and $\Omega=-2/5$. We now compute that
\begin{align*}
\cL_{>0}&=\{(0, 1), (0, 2), (1, 2), (0, 3), (1, 3), (2, 3),\\
&\quad\;\;\; (0, 4), (1, 4), (2, 4), (3, 4), (0, 5), (1, 5), (4, 5)\}.
\end{align*}
First, the assumption \eqref{eq:assump} is satisfied. We next find that the largest number among $\{\sqrt{\Delta(\vk,\ell)}/k\}$ with $(\vk,\ell)\in\cL_{>0}$ and $k\equiv \ell \pmod{L}$ is $\sqrt{\frac{2}{5}}$. Here we have two choices:
$$(\vk,\ell,k)=(0,1,1),\ (0,5,5).$$
When $k=1$, the admissible $(h,k)$ is $(0,1)$. We compute that the $I$-Bessel term is
$$\frac{\pi \csc\big(\frac{\pi}{5}\big)}{2\sqrt{15}}\Bigg(n-\frac{1}{60}\Bigg)^{-1/2}I_{-1}\Bigg(\frac{2\pi}{\sqrt{15}}\sqrt{n-\frac{1}{60}}\Bigg).$$
When $k=5$, noticing that $\gcd(0,5)=5\ne 1$, there is no admissible $(h,k)$. Hence,
\begin{align*}
g(n)&\sim \frac{\pi \csc\big(\frac{\pi}{5}\big)}{2\sqrt{15}}\Bigg(n-\frac{1}{60}\Bigg)^{-1/2}I_{-1}\Bigg(\frac{2\pi}{\sqrt{15}}\sqrt{n-\frac{1}{60}}\Bigg)\\
&\sim \frac{\csc\big(\frac{\pi}{5}\big)}{4\cdot 3^{1/4}\cdot 5^{1/4}}\Bigg(n-\frac{1}{60}\Bigg)^{-3/4}\exp\Bigg(\frac{2\pi}{\sqrt{15}}\sqrt{n-\frac{1}{60}}\Bigg).
\end{align*}

\subsection{The Rogers--Ramanujan continued fraction}

Let
$$\sum_{n\ge 0}g(n)q^n=\frac{(q,q^4;q^5)_\infty}{(q^2,q^3;q^5)_\infty}.$$
Then $\bfm=\{5,5\}$, $\bfr=\{1,2\}$ and $\bfd=\{1,-1\}$. Hence $L=5$ and $\Omega=24/5$. We compute that
\begin{align*}
\cL_{>0}&=\{(2, 5),(3, 5)\}.
\end{align*}
First, the assumption \eqref{eq:assump} is satisfied. We next find that the largest number among $\{\sqrt{\Delta(\vk,\ell)}/k\}$ with $(\vk,\ell)\in\cL_{>0}$ and $k\equiv \ell \pmod{L}$ is $\frac{2\sqrt{6}}{5\sqrt{5}}$. Here we have two choices:
$$(\vk,\ell,k)=(2, 5,5),\ (3, 5,5).$$
When $k=5$, the admissible $(h,k)$ are $(2,5)$ and $(3,5)$. We compute that, in total, the $I$-Bessel term is
$$\frac{4\pi}{5\sqrt{5}}\cos\Bigg(\frac{4\pi}{5}\bigg(n+\frac{3}{20}\bigg)\Bigg)\Bigg(n+\frac{1}{5}\Bigg)^{-1/2}I_{-1}\Bigg(\frac{4\pi}{5\sqrt{5}}\sqrt{n+\frac{1}{5}}\Bigg).$$
Notice that $\cos\big(\frac{4\pi}{5}(n+\frac{3}{20})\big)$ does not vanish for all $n$. Hence,
\begin{align*}
g(n)&\sim\frac{4\pi}{5\sqrt{5}}\cos\Bigg(\frac{4\pi}{5}\bigg(n+\frac{3}{20}\bigg)\Bigg)\Bigg(n+\frac{1}{5}\Bigg)^{-1/2}I_{-1}\Bigg(\frac{4\pi}{5\sqrt{5}}\sqrt{n+\frac{1}{5}}\Bigg)\\
&\sim \frac{2^{1/2}}{5^{3/4}}\cos\Bigg(\frac{4\pi}{5}\bigg(n+\frac{3}{20}\bigg)\Bigg)\Bigg(n+\frac{1}{5}\Bigg)^{-3/4}\exp\Bigg(\frac{4\pi}{5\sqrt{5}}\sqrt{n+\frac{1}{5}}\Bigg).
\end{align*}

\subsection{Tang's inequalities}

Let
$$\sum_{n\ge 0}g(n)q^n=\frac{(q^2,q^8;q^{10})_\infty(q^4;q^6;q^{10})_\infty^2}{(q^2,q^3;q^5)_\infty^2}.$$
Then $\bfm=\{5,10,10\}$, $\bfr=\{2,2,4\}$ and $\bfd=\{-2,1,2\}$. Hence $L=10$ and $\Omega=-8$. We compute that
\begin{align*}
\cL_{>0}&=\{(0, 1), (0, 3), (1, 3), (2, 3), (0, 5), (2, 5), (3, 5), (0, 7), (1, 7), (2, 7), (3, 7),\\
&\quad\;\;\; (4, 7), (5, 7), (6, 7), (0, 9), (1, 9), (2, 9), (3, 9), (4, 9), (5, 9), (6, 9), (7, 9),\\
&\quad\;\;\; (8, 9), (1, 10), (2, 10), (3, 10), (4, 10), (6, 10), (7, 10), (8, 10), (9, 10)\}.
\end{align*}
First, the assumption \eqref{eq:assump} is satisfied. We next find that the largest number among $\{\sqrt{\Delta(\vk,\ell)}/k\}$ with $(\vk,\ell)\in\cL_{>0}$ and $k\equiv \ell \pmod{L}$ is $\frac{1}{\sqrt{5}}$. Here we have four choices:
$$(\vk,\ell,k)=(0,1,1),\ (0,5,5),\ (2, 5,5),\ (3, 5,5).$$
When $k=1$, the admissible $(h,k)$ is $(0,1)$. We compute that the $I$-Bessel term is
$$\frac{\sqrt{2}\pi}{\sqrt{15}}\sin\Bigg(\frac{\pi}{5}\Bigg)\Bigg(n-\frac{1}{3}\Bigg)^{-1/2}I_{-1}\Bigg(\frac{\sqrt{2}\pi}{\sqrt{15}}\sqrt{n-\frac{1}{3}}\Bigg).$$
When $k=5$, the admissible $(h,k)$ are $(2,5)$ and $(3,5)$. We compute that, in total, the $I$-Bessel term is
$$\frac{\sqrt{2}\pi}{\sqrt{15}}\sin\Bigg(\frac{2\pi}{5}\big(2n+1\big)\Bigg)\Bigg(n-\frac{1}{3}\Bigg)^{-1/2}I_{-1}\Bigg(\frac{\sqrt{2}\pi}{\sqrt{15}}\sqrt{n-\frac{1}{3}}\Bigg).$$
In total, we therefore have
$$\frac{\sqrt{2}\pi}{\sqrt{15}}\Bigg(\sin\bigg(\frac{\pi}{5}\bigg)+\sin\bigg(\frac{2\pi}{5}\big(2n+1\big)\bigg)\Bigg)\Bigg(n-\frac{1}{3}\Bigg)^{-1/2}I_{-1}\Bigg(\frac{\sqrt{2}\pi}{\sqrt{15}}\sqrt{n-\frac{1}{3}}\Bigg).$$
Notice that $\sin\big(\frac{\pi}{5}\big)+\sin\big(\frac{2\pi}{5}(2n+1)\big)$ vanishes only if $n\equiv 1 \pmod{5}$. Hence, for $n\not\equiv 1 \pmod{5}$,
\begin{align*}
g(n)&\sim\frac{\sqrt{2}\pi}{\sqrt{15}}\Bigg(\sin\bigg(\frac{\pi}{5}\bigg)+\sin\bigg(\frac{2\pi}{5}\big(2n+1\big)\bigg)\Bigg)\Bigg(n-\frac{1}{3}\Bigg)^{-1/2}I_{-1}\Bigg(\frac{\sqrt{2}\pi}{\sqrt{15}}\sqrt{n-\frac{1}{3}}\Bigg)\\
&\sim \frac{1}{30^{1/4}}\Bigg(\sin\bigg(\frac{\pi}{5}\bigg)+\sin\bigg(\frac{2\pi}{5}\big(2n+1\big)\bigg)\Bigg)\Bigg(n-\frac{1}{3}\Bigg)^{-3/4}\exp\Bigg(\frac{\sqrt{2}\pi}{\sqrt{15}}\sqrt{n-\frac{1}{3}}\Bigg).
\end{align*}
It follows that $g(5n+0,2,3)>0$ and $g(5n+4)<0$ for sufficiently large $n$. If we further compute a number of lower $I$-Bessel terms, we still encounter the same vanishment for $n\equiv 1 \pmod{5}$. This highly suggests that $g(5n+1)=0$, which is, indeed, proved by Tang using elementary techniques in \cite{Tan2018}.

\medskip

All other inequalities conjectured by Tang can be proved for sufficiently large $n$ in the same manner. We omit the details here.

\section{Dedekind eta function and Jacobi theta function}

In this section, we introduce the Dedekind eta function and Jacobi theta function. All results here are standard, which can be found in, for example, \cite{Apo1990} or \cite{Zwe2002}.

Let $\tau\in\fH$ and $\vs\in\fC$. The Dedekind eta function is defined by
$$\eta(\tau):=q^{1/24}(q;q)_\infty$$
with $q:=e^{2\pi i \tau}$. Further, the Jacobi theta function reads
$$\vartheta(\vs;\tau):=\sum_{\nu\in\mathbb{Z}+\frac{1}{2}}e^{2\pi i \nu (\vs+\frac{1}{2})+\pi i\nu^2 \tau}.$$
Notice that if we put $\zeta:=e^{2\pi i \vs}$, then the Jacobi triple product identity indicates that
$$\vartheta(\vs;\tau)=-i q^{1/8}\zeta^{-1/2}(\zeta,\zeta^{-1}q,q;q)_\infty.$$
It follows immediately that
\begin{align}
\rZh(\vs;\tau):=&\; (\zeta,\zeta^{-1}q;q)_\infty\notag\\[0.5em]
=&\; ie^{-\frac{\pi i \tau}{6}}e^{\pi i \vs}\frac{\vartheta(\vs;\tau)}{\eta(\tau)}.\label{eq:Zh-basic}
\end{align}

The Dedekind eta function and Jacobi theta function are of broad interest due to their transformation properties. Let $\displaystyle\gamma=\begin{pmatrix}a&b\\c&d\end{pmatrix}\in SL_2(\mathbb{Z})$ where we assume that $c>0$. Recall that the M\"obius transformation for $\tau\in\fH$ is defined by
$$\gamma(\tau):=\frac{a\tau +b}{c\tau + d}.$$
Further, for the $\gamma$ given above, we write for convenience
$$\gamma^*(\tau):=\frac{1}{c\tau + d}.$$
If
$$\chi(\gamma)=\exp\Bigg(\pi i \left(\frac{a+d}{12c}-s(d,c)-\frac{1}{4}\right)\Bigg),$$
where, again, $s(d,c)$ is the Dedekind sum, then
\begin{align}\label{eta-trans}
\eta(\gamma(\tau))=\chi(\gamma)(c\tau+d)^{1/2}\eta(\tau)
\end{align}
and
\begin{align}\label{theta-trans1}
\vartheta(\vs\gamma^*(\tau);\gamma(\tau))=\chi(\gamma)^3(c\tau+d)^{1/2}e^{\frac{\pi i c \vs^2}{c\tau+d}}\vartheta(\vs;\tau).
\end{align}
Further, let $\alpha$ and $\beta$ be integers. The Jacobi theta function also satisfies
\begin{align}\label{theta-trans2}
\vartheta(\vs+\alpha\tau+\beta;\tau)=(-1)^{\alpha+\beta}e^{-\pi i \alpha^2 \tau}e^{-2\pi i\alpha \vs}\vartheta(\vs;\tau).
\end{align}

\section{Farey arcs and a transformation formula}\label{sec:Farey}

To study the asymptotics for the Taylor coefficients of $G(q)$, we turn to the celebrated circle method due to Rademacher \cite{Rad1937,Rad1943} whose idea originates from Hardy and Ramanujan \cite{HR1917}. Recalling that $G(q)$ is holomorphic inside the unit disk, we may directly apply Cauchy's integral formula to deduce
\begin{equation*}
g(n)=\frac{1}{2 \pi i} \oint_{\mathcal{C}:|q|=r} \frac{G(q)}{q^{n+1}}\ dq,
\end{equation*}
where the contour integral is taken counter-clockwise. Now one puts $r=e^{-2 \pi \varrho}$ with $\varrho=1/N^2$ where $N$ is a sufficiently large positive integer.

Next, we dissect the circle $\mathcal{C}$ by Farey arcs. Let $h/k$ with $\gcd(h,k)=1$ be a Farey fraction of order $N$. If we denote by $\xi_{h,k}$ the interval $[-\theta'_{h,k},\theta''_{h,k}]$ with $\theta'_{h,k}$ and $\theta''_{h,k}$ being the positive distances from $h/k$ to its neighboring mediants, then
\begin{equation*}
g(n)=\sum_{1\le k\le N} \sum_{\substack{0\le h< k\\ \gcd(h,k)=1}} e^{-\frac{2\pi i nh}{k}} \int_{\xi_{h,k}} G\big(e^{2\pi i (h/k+i \varrho +\phi)}\big) e^{-2\pi i n \phi} e^{2 \pi n \varrho}\ d\phi.
\end{equation*}
Making the changes of variables $z=k(\varrho -i \phi)$ and $\tau = (h+i z)/k$ yields
\begin{equation}\label{eq:cauchy-var}
g(n)=\sum_{1\le k\le N} \sum_{\substack{0\le h< k\\ \gcd(h,k)=1}} e^{-\frac{2\pi i nh}{k}} \int_{\xi_{h,k}} G\big(e^{2\pi i \tau}\big)e^{-2\pi i n \phi} e^{2 \pi n \varrho}\ d\phi.
\end{equation}

\medskip

Let $r<m$ be positive integers. Our next task is to apply the transformation properties of the Dedekind eta function and Jacobi theta function so that $\rZh(r\tau;m\tau)$ can be nicely reformulated around the Farey arc with respect to $h/k$. To do so, we need to construct a suitable matrix in $SL_2(\mathbb{Z})$.

Let $d=\gcd(m,k)$. For convenience, we write $m=d m'$ and $k= d k'$. Recalling that $\hbar_m(h,k)$ satisfies $\hbar_m(h,k) m' h\equiv -1 \pmod{k'}$, we put $b_{m'}=(\hbar_m(h,k) m' h+1)/k'$. It is straightforward to verify that the following matrix is in $SL_2(\mathbb{Z})$:
\begin{equation}
\gamma_{(m,h,k)}=\begin{pmatrix}\hbar_m(h,k) & -b_{m'}\\k' & -m' h\end{pmatrix}.
\end{equation}

\medskip

Since $\tau = (h+i z)/k = (h+i z)/dk'$, one may compute
\begin{align*}
&\gamma_{(m,h,k)}(m\tau)\\
&\quad=\dfrac{\hbar_m(h,k)\cdot m\frac{h+iz}{d k'}-b_{m'}}{k' \cdot m\frac{h+iz}{d k'}-m' h}=\frac{\hbar_m(h,k) m' h+\tilde{h}_{m'}(im'z)-(\hbar_m(h,k)m' h+1)}{m' h k'+k'(im'z)-m' hk'}\\
&\quad=\frac{\hbar_m(h,k)}{k'}+\frac{1}{m'k'z}i.
\end{align*}
Namely,
\begin{equation}\label{eq:gamma}
\gamma_{(m,h,k)}(m\tau)=\frac{\hbar_m(h,k) \gcd(m,k)}{k}+\frac{\gcd^2(m,k)}{mkz}i.
\end{equation}
On the other hand, we have
\begin{align*}
\gamma^*_{(m,h,k)}(m\tau)=\dfrac{1}{k' \cdot m\frac{h+iz}{d k'}-m' h}=-\frac{\gcd(m,k)}{mz}i
\end{align*}
and hence
\begin{equation}\label{eq:gamma-r}
r\tau\gamma^*_{(m,h,k)}(m\tau)=\frac{r \gcd(m,k)}{mk}-\frac{rh\gcd(m,k)}{mkz}i.
\end{equation}
Further,
\begin{align}
&r\tau\gamma^*_{(m,h,k)}(m\tau)+\lambda_{m,r}(h,k)\gamma_{(m,h,k)}(m\tau)\notag\\
&\quad=\frac{r \gcd(m,k)}{mk}+\lambda_{m,r}(h,k)\frac{\hbar_m(h,k) \gcd(m,k)}{k}+\lambda^*_{m,r}(h,k)\frac{\gcd^2(m,k)}{mkz}i.\label{eq:gamma-mix}
\end{align}

\medskip

Recalling from \eqref{eq:Zh-basic} that
\begin{align*}
\rZh(r\tau;m\tau)=ie^{-\frac{\pi i m\tau}{6}}e^{\pi i r\tau}\frac{\vartheta(r\tau;m\tau)}{\eta(m\tau)},
\end{align*}
one has, from \eqref{eta-trans}, \eqref{theta-trans1}, \eqref{theta-trans2} and the fact $s(-m'h,k')=-s(m'h,k')$, that
\begin{align*}
\rZh(r\tau;m\tau)&=ie^{-\frac{\pi i m\tau}{6}}e^{\pi i r\tau} \chi(\gamma_{(m,h,k)})^{-2}e^{-\frac{\pi i k' r^2\tau^2}{k'm\tau-m' h}}\\
&\quad\times\frac{\vartheta(r\tau\gamma^*_{(m,h,k)}(m\tau);\gamma_{(m,h,k)}(m\tau))}{\eta(\gamma_{(m,h,k)}(m\tau))}\\
&=ie^{-\frac{\pi i m\tau}{6}}e^{\pi i r\tau} \chi(\gamma_{(m,h,k)})^{-2}e^{-\frac{\pi i k r^2\tau^2}{km\tau-m h}}(-1)^{\lambda_{m,r}(h,k)}\\
&\quad\times e^{\pi i \lambda^2_{m,r}(h,k)\gamma_{(m,h,k)}(m\tau)}e^{2\pi i \lambda_{m,r}(h,k)r\tau \gamma^*_{(m,h,k)}(m\tau)}\\
&\quad\times \frac{\vartheta(r\tau\gamma^*_{(m,h,k)}(m\tau)+\lambda_{m,r}(h,k)\gamma_{(m,h,k)}(m\tau);\gamma_{(m,h,k)}(m\tau))}{\eta(\gamma_{(m,h,k)}(m\tau))}\\
&=i (-1)^{\lambda_{m,r}(h,k)}e^{-2\pi i s(m'h,k')}\\
&\quad\times \exp\Bigg(\pi i\bigg(\frac{rh}{k}-\frac{rd}{mk}+\frac{2rd\lambda^*_{m,r}(h,k)}{mk}\\
&\qquad\qquad\quad+\frac{\hbar_m(h,k)d}{k}(\lambda^2_{m,r}(h,k)-\lambda_{m,r}(h,k))\bigg)\Bigg)\\
&\quad\times \exp\Bigg(\frac{\pi}{12k}\bigg(\Big(2m-12r+\frac{12r^2}{m}\Big)z\\
&\qquad\qquad\quad-\Big(\frac{2d^2}{m}+\frac{12d^2}{m}({\lambda^*}^2_{\!\!\!m,r}(h,k)-\lambda^*_{m,r}(h,k))\Big)\frac{1}{z}\bigg)\Bigg)\\
&\quad\times \rZh\big(r\tau\gamma^*_{(m,h,k)}(m\tau)+\lambda_{m,r}(h,k)\gamma_{(m,h,k)}(m\tau);\gamma_{(m,h,k)}(m\tau)\big).
\end{align*}
Consequently, we deduce the following transformation formula.

\begin{align}
&G(e^{2\pi i \tau})=\prod_{j=1}^{J}\rZh^{\delta_j}(r_j\tau;m_j\tau)\notag\\
&\quad=i^{\sum_{j=1}^{J}\delta_j}(-1)^{\sum_{j=1}^{J}\delta_j \lambda_{m_j,r_j}(h,k)}\omega_{h,k}^2\;\rD_{h,k}\notag\\
&\quad\quad\times \exp\Bigg(\frac{\pi}{12k}(\Omega z+\Delta(h,k)z^{-1})\Bigg)\notag\\
&\quad\quad\times \prod_{j=1}^J \rZh^{\delta_j}\big(r_j\tau\gamma^*_{(m_j,h,k)}(m_j\tau)+\lambda_{m_j,r_j}(h,k)\gamma_{(m_j,h,k)}(m_j\tau);\gamma_{(m_j,h,k)}(m_j\tau)\big).\label{eq:trans-main}
\end{align}

\begin{remark}\label{rmk:Im-part}
It follows from \eqref{eq:gamma-mix} that for all $j=1,2,\ldots,J$,
\begin{align*}
0\le \Im\big(r_j\tau\gamma^*_{(m_j,h,k)}(m_j\tau)+\lambda_{m_j,r_j}(h,k)\gamma_{(m_j,h,k)}(m_j\tau)\big)<\Im\big(\gamma_{(m_j,h,k)}(m_j\tau)\big).
\end{align*}
\end{remark}

\section{Some auxiliary results}

\subsection{Necessary bounds}

Now we are going to present some useful bounds, which were obtained in the previous work; see, for example, \cite{Rad1937}.

\medskip

First, it is well known (cf.~Chapter 3 in \cite{HW1979}) that for a Farey fraction $h/k$ of order $N$, one has
\begin{equation}\label{eq:theta-bound}
\frac{1}{2kN}\le \theta'_{h,k},\theta''_{h,k}\le \frac{1}{kN}
\end{equation}
and hence
\begin{equation}\label{eq:xi-bound}
\frac{1}{kN}\le |\xi_{h,k}| \le \frac{2}{kN}.
\end{equation}

Next, since $z=k(\varrho -i \phi)$, it follows that
\begin{equation}\label{eq:Re-z-bound}
\Re(z)=k\varrho=\frac{k}{N^2}.
\end{equation}
Further, one has
\begin{align}\label{eq:Re-1-z-bound}
\Re\left(\frac{1}{z}\right)\ge \frac{k}{2}
\end{align}
since
\begin{align*}
\Re\left(\frac{1}{z}\right)=\frac{1}{k}\frac{\varrho}{\varrho^2+\phi^2}\ge \frac{1}{k}\frac{N^{-2}}{N^{-4} + k^{-2} N^{-2}}=\frac{k}{k^2N^{-2}+1}\ge \frac{k}{1+1}=\frac{k}{2},
\end{align*}
where we use the fact $k\le N$ in the last inequality.

\subsection{A partition-theoretic result}\label{sec:ptn-res}

Let $\eta$ be a positive integer. Let $p^*_{\eta}(s,t;n)$ denote the number of 2-colored (say, red and blue) partition $\eta$-tuples of $n$ with $s$ parts in total colored by red and $t$ parts in total colored by blue. Here we allow $0$ as a part. Let $q$, $\zeta$ and $\xi$ be such that $|q|<1$, $|\zeta|<1$ and $|\xi|<1$. The following infinite triple summation
$$\sum_{n\ge 0}\sum_{s\ge 0}\sum_{t\ge 0}p^*_{\eta}(s,t;n)\zeta^s \xi^t q^n=\left(\frac{1}{(\zeta,\xi;q)_\infty}\right)^{\eta}$$
is absolutely convergent. Further, considering another absolutely convergent infinite triple summation
$$\sum_{n\ge 0}\sum_{s\ge 0}\sum_{t\ge 0}d^*_{\eta}(s,t;n)\zeta^s \xi^t q^n:=(\zeta,\xi;q)_\infty^{\eta},$$
an easy partition-theoretic argument indicates that $|d^*_{\eta}(s,t;n)|\le p^*_{\eta}(s,t;n)$ for all $s,t,n\ge 0$. Also, we have $d^*_{\eta}(0,0;0)=p^*_{\eta}(0,0;0)=1$.

In general, for a nonzero integer $\delta$, if we write
$$\sum_{n\ge 0}\sum_{s\ge 0}\sum_{t\ge 0}a_{\delta}(s,t;n)\zeta^s \xi^t q^n:=(\zeta,\xi;q)_\infty^{\delta},$$
then
$$a_{\delta}(s,t;n)=\begin{cases}
p^*_{|\delta|}(s,t;n) & \text{if $\delta<0$},\\
d^*_{|\delta|}(s,t;n) & \text{if $\delta>0$},
\end{cases}$$
and hence $|a_{\delta}(s,t;n)|\le p^*_{|\delta|}(s,t;n)$ for all $s,t,n\ge 0$. Trivially, we also have
\begin{align*}
\Big|(\zeta,\xi;q)_\infty^{\delta}\Big|&=\Bigg|\sum_{n\ge 0}\sum_{s\ge 0}\sum_{t\ge 0}a_{\delta}(s,t;n)\zeta^s \xi^t q^n\Bigg|\\
&\le \sum_{n\ge 0}\sum_{s\ge 0}\sum_{t\ge 0}p^*_{|\delta|}(s,t;n)|\zeta|^s |\xi|^t |q|^n.
\end{align*}
Further, for real $0\le \alpha,\beta,x<1$, we have
\begin{align}
\sum_{n\ge 0}\sum_{s\ge 0}\sum_{t\ge 0}p^*_{1}(s,t;n)\alpha^s \beta^t x^n&=\frac{1}{(\alpha,\beta;x)_\infty}\notag\\
&=\exp\Bigg(-\sum_{k\ge 0}\log(1-\alpha x^k)-\sum_{\ell\ge 0}\log(1-\beta x^\ell)\Bigg)\notag\\
&\le \exp\Bigg(\frac{\alpha}{1-\alpha}+\frac{\alpha x}{(1-x)^2}+\frac{\beta}{1-\beta}+\frac{\beta x}{(1-x)^2}\Bigg).\label{eq:some-bound}
\end{align}

\section{Outline of the proof}

We know from \eqref{eq:cauchy-var} and \eqref{eq:trans-main} that
\begin{align*}
g(n)&=\sum_{1\le k\le N} \sum_{\substack{0\le h< k\\ \gcd(h,k)=1}} e^{-\frac{2\pi i nh}{k}} \int_{\xi_{h,k}} G\big(e^{2\pi i \tau}\big)e^{-2\pi i n \phi} e^{2 \pi n \varrho}\ d\phi\\
&=i^{\sum_{j=1}^{J}\delta_j}\sum_{1\le k\le N} \sum_{\substack{0\le h< k\\ \gcd(h,k)=1}} e^{-\frac{2\pi i nh}{k}}(-1)^{\sum_{j=1}^{J}\delta_j \lambda_{m_j,r_j}(h,k)}\omega_{h,k}^2\;\rD_{h,k}\\
&\times \int_{\xi_{h,k}} \exp\Bigg(\frac{\pi}{12k}(\Omega z+\Delta(h,k)z^{-1})\Bigg)\\
&\quad\times \prod_{j=1}^J \rZh^{\delta_j}\big(r_j\tau\gamma^*_{(m_j,h,k)}(m_j\tau)+\lambda_{m_j,r_j}(h,k)\gamma_{(m_j,h,k)}(m_j\tau);\gamma_{(m_j,h,k)}(m_j\tau)\big)\\
&\quad\times e^{-2\pi i n \phi} e^{2 \pi n \varrho}\ d\phi.
\end{align*}

Let us fix a Farey fraction $h/k$. We first find integers $1\le \ell\le L$ and $0\le \vk< \ell$ such that $k\equiv \ell \pmod{L}$ and $h\equiv \vk \pmod{\ell}$. For convenience, we write $\rho(h,k):=(\vk,\ell)$. It is not hard to observe that for all $j=1,2,\ldots,J$,
$$\gcd(m_j,k)=\gcd(m_j,\ell)\quad\text{and}\quad \lambda^*_{m_j,r_j}(h,k)=\lambda^*_{m_j,r_j}(\vk,\ell).$$
It turns out that $\Delta(h,k)=\Delta(\vk,\ell)$. We now split $g(n)$ as follows.
\begin{align*}
g(n)&=i^{\sum_{j=1}^{J}\delta_j}\sum_{1\le \ell\le L}\sum_{0\le \vk< \ell}\sum_{\substack{1\le k\le N\\k \equiv \ell \bmod{L}}} \sum_{\substack{0\le h< k\\ \gcd(h,k)=1\\ h\equiv \vk \bmod{\ell}}} e^{-\frac{2\pi i nh}{k}}\\
&\times(-1)^{\sum_{j=1}^{J}\delta_j \lambda_{m_j,r_j}(h,k)}\omega_{h,k}^2\;\rD_{h,k}\\
&\times \int_{\xi_{h,k}} \exp\Bigg(\frac{\pi}{12k}(\Omega z+\Delta(\vk,\ell)z^{-1})\Bigg)\\
&\quad\times \prod_{j=1}^J \rZh^{\delta_j}\big(r_j\tau\gamma^*_{(m_j,h,k)}(m_j\tau)+\lambda_{m_j,r_j}(h,k)\gamma_{(m_j,h,k)}(m_j\tau);\gamma_{(m_j,h,k)}(m_j\tau)\big)\\
&\quad\times e^{-2\pi i n \phi} e^{2 \pi n \varrho}\ d\phi\\
&=:i^{\sum_{j=1}^{J}\delta_j}\sum_{1\le \ell\le L}\sum_{0\le \vk< \ell} S_{\vk,\ell}.
\end{align*}

The minor arcs are those with respect to $h/k$ with $\rho(h,k)\in\mathcal{L}_{\le 0}$. We have the following bound.

\begin{theorem}\label{th:minor}
Let $(\vk,\ell)\in \mathcal{L}_{\le 0}$. For positive integers $n>-\Omega/24$, we have
$$S_{\vk,\ell}\ll_{\bfm,\bfr,\bfd} \exp\Bigg(\frac{2\pi}{N^2} \bigg(n+\frac{\Omega}{24}\bigg)\Bigg).$$
In particular, if we take $N=\myfloor{\sqrt{2\pi \left(n+\frac{\Omega}{24}\right)}}$, then $S_{\vk,\ell}\ll_{\bfm,\bfr,\bfd} 1$.
\end{theorem}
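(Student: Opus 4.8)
The plan is to bound $|S_{\vk,\ell}|$ by the triangle inequality, estimate the surviving integrand uniformly over each Farey arc $\xi_{h,k}$, and sum. In the summand of $S_{\vk,\ell}$ the factors $e^{-2\pi i nh/k}$, $(-1)^{\sum_{j}\delta_j\lambda_{m_j,r_j}(h,k)}$, $\omega_{h,k}^2$, $\rD_{h,k}$ and $e^{-2\pi i n\phi}$ all have modulus $1$, so
\begin{equation*}
|S_{\vk,\ell}|\le\sum_{\substack{1\le k\le N\\k\equiv\ell\bmod{L}}}\ \sum_{\substack{0\le h<k\\\gcd(h,k)=1\\h\equiv\vk\bmod{\ell}}}\ \int_{\xi_{h,k}}\left|\exp\left(\frac{\pi}{12k}\bigl(\Omega z+\Delta(\vk,\ell)z^{-1}\bigr)\right)\right|\prod_{j=1}^{J}\left|\rZh^{\delta_j}(w_j;\tau_j)\right|e^{2\pi n\varrho}\,d\phi,
\end{equation*}
where $w_j:=r_j\tau\gamma^*_{(m_j,h,k)}(m_j\tau)+\lambda_{m_j,r_j}(h,k)\gamma_{(m_j,h,k)}(m_j\tau)$ and $\tau_j:=\gamma_{(m_j,h,k)}(m_j\tau)$. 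It then remains to bound the exponential factor and the $\rZh$-product uniformly in $h$, $k$ and $\phi\in\xi_{h,k}$, and to perform the summation.

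The exponential factor is immediate: since $\Omega$ and $\Delta(\vk,\ell)$ are real, its modulus equals $\exp\bigl(\tfrac{\pi}{12k}(\Omega\Re z+\Delta(\vk,\ell)\Re(z^{-1}))\bigr)$. By \eqref{eq:Re-z-bound}, $\Re z=k/N^2$, so the $\Omega$-part equals $\exp(\pi\Omega/(12N^2))$; and since $(\vk,\ell)\in\cL_{\le 0}$ gives $\Delta(\vk,\ell)\le 0$ while $\Re(z^{-1})\ge k/2$ by \eqref{eq:Re-1-z-bound}, the $\Delta$-part is $\le\exp(\pi\Delta(\vk,\ell)/24)\le 1$. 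Together with $e^{2\pi n\varrho}=e^{2\pi n/N^2}$ this produces the factor $\exp\bigl(\tfrac{2\pi}{N^2}(n+\tfrac{\Omega}{24})\bigr)$, and everything else must be shown to be $O_{\bfm,\bfr,\bfd}(1)$.

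The heart of the argument is the uniform bound $|\rZh^{\delta_j}(w_j;\tau_j)|\ll_{\bfm,\bfr,\bfd}1$ for each $j$. Writing $\rZh(\vs;\tau)=(\zeta,\zeta^{-1}q;q)_\infty$ with $\zeta=e^{2\pi i\vs}$ and $q=e^{2\pi i\tau}$, one reads off from \eqref{eq:gamma} and \eqref{eq:gamma-mix} that $\Im\tau_j=\tfrac{\gcd^2(m_j,k)}{m_jk}\Re(z^{-1})$ and $\Im w_j=\lambda^*_{m_j,r_j}(h,k)\tfrac{\gcd^2(m_j,k)}{m_jk}\Re(z^{-1})$; hence by \eqref{eq:Re-1-z-bound} and $0\le\lambda^*_{m_j,r_j}(h,k)<1$ (a value which, by the reduction preceding the theorem, depends only on $(\vk,\ell)$),
\begin{equation*}
\Im\tau_j\ge\frac{\gcd^2(m_j,k)}{2m_j},\qquad 0\le\Im w_j\le\Im\tau_j,\qquad\Im\tau_j-\Im w_j\ge\bigl(1-\lambda^*_{m_j,r_j}(h,k)\bigr)\frac{\gcd^2(m_j,k)}{2m_j},
\end{equation*}
consistently with Remark~\ref{rmk:Im-part}. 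If $\lambda^*_{m_j,r_j}(h,k)>0$, then $|\zeta|=e^{-2\pi\Im w_j}$, $|\zeta^{-1}q|=e^{-2\pi(\Im\tau_j-\Im w_j)}$ and $|q|=e^{-2\pi\Im\tau_j}$ are each bounded away from $1$ by constants depending only on $\bfm$, $\bfr$ (and, harmlessly, on the finitely many $(\vk,\ell)$), so the partition-theoretic estimate of Section~\ref{sec:ptn-res}, $|(\zeta,\zeta^{-1}q;q)_\infty^{\delta_j}|\le\bigl(1/(|\zeta|,|\zeta^{-1}q|;|q|)_\infty\bigr)^{|\delta_j|}$, together with \eqref{eq:some-bound}, yields $|\rZh^{\delta_j}(w_j;\tau_j)|\ll 1$. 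If instead $\lambda^*_{m_j,r_j}(h,k)=0$, then by \eqref{eq:gamma-mix} the point $\zeta$ is a root of unity independent of $z$ with $|\zeta|=1$; here I would split off the leading factor via $(\zeta;q)_\infty=(1-\zeta)(\zeta q;q)_\infty$, note that $1-\zeta\ne0$ is one of finitely many nonzero constants by Proposition~\ref{lem:pre}, and bound $\bigl((\zeta q,\zeta^{-1}q;q)_\infty\bigr)^{\delta_j}$ exactly as before, now with $|\zeta q|=|\zeta^{-1}q|=|q|\le e^{-\pi\gcd^2(m_j,k)/m_j}<1$. Either way $|\rZh^{\delta_j}(w_j;\tau_j)|\ll_{\bfm,\bfr,\bfd}1$, hence so is the product over $j$.

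Assembling the three bounds, the integrand on each $\xi_{h,k}$ is $\ll_{\bfm,\bfr,\bfd}\exp\bigl(\tfrac{2\pi}{N^2}(n+\tfrac{\Omega}{24})\bigr)$; integrating with $|\xi_{h,k}|\le 2/(kN)$ from \eqref{eq:xi-bound} and summing --- the at most $k$ admissible $h$ cancelling the $1/k$ and the at most $N$ values of $k$ cancelling the $1/N$ --- we obtain $S_{\vk,\ell}\ll_{\bfm,\bfr,\bfd}\exp\bigl(\tfrac{2\pi}{N^2}(n+\tfrac{\Omega}{24})\bigr)$, the displayed bound, and in particular $S_{\vk,\ell}\ll_{\bfm,\bfr,\bfd}1$ once $N=\myfloor{\sqrt{2\pi(n+\Omega/24)}}$. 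I expect the main obstacle to be the uniform control of the $\rZh$-factors: the estimate \eqref{eq:some-bound} degenerates as $|\zeta|\to1$, so the case $\lambda^*_{m_j,r_j}(h,k)=0$ (where $|\zeta|=1$ exactly) must be isolated and handled via Proposition~\ref{lem:pre}; one should also check that passing to absolute values loses nothing essential, i.e.\ that $(\zeta,\zeta^{-1}q;q)_\infty$ does not vanish on the arcs --- which again reduces to that same case.
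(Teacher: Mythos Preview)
Your proposal is correct and follows essentially the same route as the paper: triangle inequality, then the exponential factor is controlled via $\Re z=k/N^{2}$ and $\Delta(\vk,\ell)\le 0$ together with $\Re(z^{-1})\ge k/2$, the $\rZh$-product is bounded by splitting each $j$ according to whether $\lambda^{*}_{m_j,r_j}(\vk,\ell)$ vanishes (the paper's sets $\cJ^{*}$ and $\cJ^{**}$), invoking Proposition~\ref{lem:pre} and the partition-theoretic estimate \eqref{eq:some-bound} in the respective cases, and the final summation over Farey arcs is identical. Your observation that in the $\lambda^{*}=0$ case $\zeta$ ranges over finitely many $m_j$-th roots of unity (so $(1-\zeta)^{\delta_j}$ is uniformly bounded above and below) is exactly what the paper extracts from \eqref{eq:ineq-important}.
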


The arcs with respect to $h/k$ with $\rho(h,k)\in\mathcal{L}_{> 0}$ give us the main contribution.

\begin{theorem}\label{th:major}
Let $(\vk,\ell)\in \mathcal{L}_{> 0}$. If the inequality
\begin{align}\label{eq:condition-1}
\min_{1\le j\le J}\left(\Upsilon\big(\lambda^*_{m_j,r_j}(\vk,\ell)\big)\frac{\gcd^2(m_j,\ell)}{m_j}\right)\ge \frac{\Delta(\vk,\ell)}{24}
\end{align}
holds, then for positive integers $n>-\Omega/24$, we have
\begin{align*}
S_{\vk,\ell}&=E_{\vk,\ell}+\sum_{\substack{1\le k\le N\\k \equiv \ell \bmod{L}}} \sum_{\substack{0\le h< k\\ \gcd(h,k)=1\\ h\equiv \vk \bmod{\ell}}} e^{-\frac{2\pi i nh}{k}}(-1)^{\sum_{j=1}^{J}\delta_j \lambda_{m_j,r_j}(h,k)}\omega_{h,k}^2\;\rD_{h,k}\Pi_{h,k}\\
&\quad\quad\quad\quad\times \frac{2\pi}{k} \left(\frac{24n+\Omega}{\Delta(\vk,\ell)}\right)^{-\frac{1}{2}} I_{-1}\left(\frac{\pi }{6k}\sqrt{\Delta(\vk,\ell)(24n+\Omega)}\right),
\end{align*}
where
\begin{align*}
E_{\vk,\ell}\ll_{\bfm,\bfr,\bfd} e^{\frac{2\pi}{N^2} (n+\frac{\Omega}{24})}+\frac{N^2 e^{\frac{2\pi}{N^2} \left(n+\frac{\Omega}{24}\right)}}{n+\frac{\Omega}{24}}.
\end{align*}
In particular, if we take $N=\myfloor{\sqrt{2\pi \left(n+\frac{\Omega}{24}\right)}}$, then $E_{\vk,\ell}\ll_{\bfm,\bfr,\bfd} 1$.
\end{theorem}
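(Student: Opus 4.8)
The goal is to extract the main $I$-Bessel term from each integral
$\int_{\xi_{h,k}}\exp\!\big(\tfrac{\pi}{12k}(\Omega z+\Delta(\vk,\ell)z^{-1})\big)\,\prod_j\rZh^{\delta_j}(\cdots)\,e^{-2\pi in\phi}e^{2\pi n\varrho}\,d\phi$
and bound everything else by $E_{\vk,\ell}$. First I would isolate the dominant exponential: since $z=k(\varrho-i\phi)$, we have $e^{2\pi n\varrho}e^{-2\pi in\phi}=e^{\frac{2\pi n}{k}z}$, so the full exponent in the integrand is $\exp\!\big(\tfrac{\pi}{12k}\big((24n+\Omega)z+\Delta(\vk,\ell)z^{-1}\big)\big)$ times the $\rZh$-product. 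The plan is to replace the $\rZh$-product by its ``constant term'' $\Pi_{h,k}$ (the value obtained by sending $z\to 0$, i.e.\ $\Im\tau\to\infty$, which kills all $q$-powers except the leading one in each factor whose argument's imaginary part vanishes — these are exactly the $j$ with $\lambda^*_{m_j,r_j}(h,k)=0$, by Remark \ref{rmk:Im-part}), incurring an error, and then recognizing the resulting integral of $z^{-?}\exp(\cdots)$ over $\xi_{h,k}$ as a truncated Hankel-type contour integral for $I_{-1}$.

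Concretely the steps are: (i) Write $\prod_j\rZh^{\delta_j}(\cdots)=\Pi_{h,k}+\big(\prod_j\rZh^{\delta_j}(\cdots)-\Pi_{h,k}\big)$ and estimate the difference. Using the partition-theoretic bounds of Section \ref{sec:ptn-res} — in particular $|a_\delta(s,t;n)|\le p^*_{|\delta|}(s,t;n)$ and the bound \eqref{eq:some-bound} — together with Remark \ref{rmk:Im-part} which guarantees the ``nome'' governing each $\rZh$ factor has the form $e^{2\pi i\gamma_{(m_j,h,k)}(m_j\tau)}$ with $\Im\gamma_{(m_j,h,k)}(m_j\tau)=\gcd^2(m_j,k)/(m_jk|z|^2)\cdot\Re$-type quantity $\asymp 1/|z|$, one shows the difference is $O(e^{-c/|z|})$ for some $c>0$ controlled by the \emph{second-smallest} relevant power, and this is where hypothesis \eqref{eq:condition-1} enters: it ensures the gap exponent $\Upsilon(\lambda^*_{m_j,r_j})\gcd^2(m_j,\ell)/m_j$ dominates $\Delta(\vk,\ell)/24$, so that $\exp(-c/|z|)\cdot\exp(\tfrac{\pi}{12k}\Delta(\vk,\ell)\Re(z^{-1}))$ is still decaying (or at worst bounded) when multiplied against the main growth $\exp(\tfrac{\pi}{12k}(24n+\Omega)\Re z)$. (ii) For the main piece $\Pi_{h,k}\int_{\xi_{h,k}}z^{w}\exp(\cdots)\,d\phi$ (the power $w$ coming from the $q$-prefactors $e^{\pi i r_j\tau}$, $e^{-\pi i m_j\tau/6}$ already absorbed into $\Omega$, so in fact $w=0$ after the normalization — one should double-check the bookkeeping), change variables back to $z$, so $d\phi=-\tfrac{1}{ik}dz$ and the path is a vertical segment $\Re z=k/N^2$, $|\Im z|\le$ (controlled by \eqref{eq:xi-bound}). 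Complete this segment to the Hankel contour for the classical integral representation $I_{-1}(x)=\tfrac{1}{2\pi i}\int_{(c)} t^{-0}\exp\!\big(\tfrac{x}{2}(t+t^{-1})\big)\,\tfrac{dt}{t}$ type formula (matching $x=\tfrac{\pi}{6k}\sqrt{\Delta(\vk,\ell)(24n+\Omega)}$ after rescaling $z=\sqrt{\Delta(\vk,\ell)/(24n+\Omega)}\cdot t$), picking up the exact term $\tfrac{2\pi}{k}\big(\tfrac{24n+\Omega}{\Delta(\vk,\ell)}\big)^{-1/2}I_{-1}(\cdots)$. (iii) Bound the tails of the completed contour: the two horizontal/arc pieces contribute the second error term $\tfrac{N^2}{n+\Omega/24}e^{\frac{2\pi}{N^2}(n+\Omega/24)}$, exactly as in Rademacher's treatment of $p(n)$, using $\Re z=k/N^2$, $\Re(z^{-1})\ge k/2$ from \eqref{eq:Re-1-z-bound}, and $k\le N$.

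The main obstacle, I expect, is step (i): getting a clean, uniform-in-$(h,k)$ bound on $\prod_j\rZh^{\delta_j}(\cdots)-\Pi_{h,k}$ that is strong enough to survive multiplication by the main exponential growth $\exp(\tfrac{\pi(24n+\Omega)}{12k}\Re z)$ — here one cannot afford any slack, and the precise role of $\Upsilon$ and of the ``second smallest power'' must be tracked carefully. The $\Upsilon$ function is tailor-made for this: for a factor with $\lambda^*_{m_j,r_j}=\lambda^*\in(0,1)$, the smallest nonzero exponent in the $q$-expansion of $\rZh(\cdots)=\prod_k(1-\zeta q^k)(1-\zeta^{-1}q^{k-1})$-type product, after the shift, is $\min(\lambda^*,1-\lambda^*)=\Upsilon(\lambda^*)$ times $\Im\gamma(\cdots)$; for a factor with $\lambda^*=0$ the smallest exponent \emph{beyond} the constant term $1-\zeta$ is $1=\Upsilon(0)$. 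Thus \eqref{eq:condition-1} is exactly the statement ``the decay of the error beats the growth rate $\Delta/24$ it is weighed against,'' and verifying this inequality is used with the correct $\Re(z^{-1})\asymp 1/|z|$ asymptotics is the technical heart of the argument. Everything after that — the Bessel-integral identification and tail bounds — is standard Rademacher circle-method bookkeeping, and I would follow \cite{Rad1937} closely there.
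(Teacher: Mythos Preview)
Your proposal is correct and follows essentially the same route as the paper: the split $\prod_j\rZh^{\delta_j}=\Pi_{h,k}+(\text{remainder})$ with the remainder controlled via the partition-theoretic bounds of Section~\ref{sec:ptn-res} and hypothesis~\eqref{eq:condition-1}, followed by the Hankel-contour evaluation of the surviving integral as $\tfrac{2\pi}{k}\big(\tfrac{24n+\Omega}{\Delta}\big)^{-1/2}I_{-1}(\cdots)$ plus tail errors, is exactly what the paper does (the integral step is packaged there as Lemma~\ref{le:key-int}). One small clean-up: the imaginary part you need is $\Im\gamma_{(m_j,h,k)}(m_j\tau)=\tfrac{\gcd^2(m_j,k)}{m_jk}\Re(z^{-1})$ from \eqref{eq:gamma}, not a quantity in $1/|z|$, and it is precisely $\Re(z^{-1})\ge k/2$ that makes the $\Sigma_2$-bound uniform; otherwise your outline matches the paper's argument step for step.
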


Theorems \ref{th:minor} and \ref{th:major} immediately imply the main result. Before presenting proofs of the two results respectively in Sections \ref{sec:minor} and \ref{sec:major}, we make the following preparations.

\medskip

For fixed $\vk$ and $\ell$ with $1\le \ell\le L$ and $0\le \vk< \ell$, one may split the indexes $\{1,2,\ldots,J\}$ into two disjoint parts:
$$\cJ^*_{\vk,\ell}=\{j_1^*,\ldots,j_\alpha^*\} \quad\text{and}\quad \cJ^{**}_{\vk,\ell}=\{j_1^{**},\ldots,j_\beta^{**}\},$$
so that for $j^*\in \cJ^*_{\vk,\ell}$ we have $\lambda^*_{m_{j^*},r_{j^*}}(\vk,\ell)=0$ and for $j^{**}\in \cJ^{**}_{\vk,\ell}$ we have $\lambda^*_{m_{j^{**}},r_{j^{**}}}(\vk,\ell)\ne 0$.

\begin{proposition}\label{lem:pre}
Let $j^* \in \cJ^*_{\vk,\ell}$. For any Farey fraction $h/k$ such that $k\equiv \ell \pmod{L}$ and $h\equiv \vk \pmod{\ell}$, we have that
\begin{align}
&r_{j^*}\tau\gamma^*_{(m_{j^*},h,k)}(m_{j^*}\tau)+\lambda_{m_{j^*},r_{j^*}}(h,k)\gamma_{(m_{j^*},h,k)}(m_{j^*}\tau)\notag\\
&\quad=\frac{r_{j^*}\gcd(m_{j^*},k)+r_{j^*}\hbar_{m_{j^*}}(h,k)m_{j^*}h}{m_{j^*}k}
\end{align}
is a real noninteger. Further,
\begin{align}\label{eq:ineq-important}
\Big|1-e^{\frac{2\pi i}{m_{j^*}}}\Big|\le \Bigg|1-e^{2\pi i \big(r_{j^*}\tau\gamma^*_{(m_{j^*},h,k)}(m_{j^*}\tau)+\lambda_{m_{j^*},r_{j^*}}(h,k)\gamma_{(m_{j^*},h,k)}(m_{j^*}\tau)\big)}\Bigg|\le 2.
\end{align}
\end{proposition}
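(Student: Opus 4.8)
The plan is to read the closed form off the identity \eqref{eq:gamma-mix} and then run a short arithmetic argument. Since $j^*\in\cJ^*_{\vk,\ell}$ and the Farey fraction $h/k$ satisfies $k\equiv\ell\pmod{L}$ and $h\equiv\vk\pmod{\ell}$, we have $\lambda^*_{m_{j^*},r_{j^*}}(h,k)=\lambda^*_{m_{j^*},r_{j^*}}(\vk,\ell)=0$. Inserting this into \eqref{eq:gamma-mix} annihilates the term carrying $z^{-1}i$, so the left-hand side of the asserted identity is real; moreover $\lambda^*=0$ forces $\lambda_{m_{j^*},r_{j^*}}(h,k)=r_{j^*}h/\gcd(m_{j^*},k)$, and substituting this into the two remaining terms of \eqref{eq:gamma-mix} and combining over the common denominator $m_{j^*}k$ produces exactly $\bigl(r_{j^*}\gcd(m_{j^*},k)+r_{j^*}\hbar_{m_{j^*}}(h,k)m_{j^*}h\bigr)/(m_{j^*}k)$. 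This settles the displayed identity apart from the word ``noninteger''.

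For the remaining claims, abbreviate $m=m_{j^*}$, $r=r_{j^*}$, $d=\gcd(m,k)$, and write $m=dm'$, $k=dk'$; recall from Section~\ref{sec:Farey} the integer $\hbar_m(h,k)$ with $\hbar_m(h,k)m'h\equiv-1\pmod{k'}$, the integer $b_{m'}=(\hbar_m(h,k)m'h+1)/k'$, and the matrix $\gamma_{(m,h,k)}\in SL_2(\mathbb{Z})$, whose determinant relation $b_{m'}k'-\hbar_m(h,k)m'h=1$ immediately gives $\gcd(b_{m'},m')=1$. Using $\gcd(m,k)=d$, $mk=d^2m'k'$ and $d+\hbar_m(h,k)dm'h=d\,b_{m'}k'$, the closed form above simplifies to $rb_{m'}/(dm')$. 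Now $\lambda^*_{m,r}(h,k)=0$ means $d\mid rh$, and since $d\mid k$ and $\gcd(h,k)=1$ we have $\gcd(h,d)=1$, whence $d\mid r$; write $r=dr_0$ with $r_0$ a positive integer. From $1\le r<m=dm'$ we get $1\le r_0\le m'-1$, hence $m'\ge2$, and the closed form equals $r_0 b_{m'}/m'$. Since $\gcd(b_{m'},m')=1$ and $m'\nmid r_0$, this is not an integer; more precisely its fractional part is $c/m'$ for some $c\in\{1,\dots,m'-1\}$.

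For \eqref{eq:ineq-important}, write $x$ for the value just analysed and use $\lvert1-e^{i\theta}\rvert=2\lvert\sin(\theta/2)\rvert$, so that $\lvert1-e^{2\pi i x}\rvert=\lvert1-e^{2\pi i c/m'}\rvert=2\sin(\pi c/m')$. This is at most $2$, which is the upper bound. For the lower bound, $\sin(\pi c/m')$ over $c\in\{1,\dots,m'-1\}$ is minimized at $c=1$ (equivalently $c=m'-1$) by concavity of $\sin$ on $[0,\pi]$, so $\lvert1-e^{2\pi i x}\rvert\ge2\sin(\pi/m')$; finally $m'\le m$ together with $m'\ge2$ gives $0<\pi/m\le\pi/m'\le\pi/2$, and monotonicity of $\sin$ on $[0,\pi/2]$ yields $2\sin(\pi/m')\ge2\sin(\pi/m)=\lvert1-e^{2\pi i/m}\rvert$.

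There is no deep obstacle here. The only point requiring care is the bookkeeping that links the $SL_2(\mathbb{Z})$ determinant relation to $\gcd(b_{m'},m')=1$, together with the observation that $\lambda^*=0$ forces $d\mid r$ with quotient $r_0<m'$ — this is exactly what pins the fractional part of the argument to a value $c/m'$ with $1\le c\le m'-1$, simultaneously delivering the noninteger assertion and the sharp lower bound $2\sin(\pi/m')\ge2\sin(\pi/m)$.
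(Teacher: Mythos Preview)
Your proof is correct and follows essentially the same route as the paper: both reduce via \eqref{eq:gamma-mix} to the real value $b_{m'}r/m=r_0 b_{m'}/m'$ (with $r_0=r/d$), use the $SL_2$ determinant to get $\gcd(b_{m'},m')=1$, and conclude $m'\nmid r_0$ from $1\le r_0\le m'-1$. The only difference is that where the paper writes ``\eqref{eq:ineq-important} follows immediately'', you supply the explicit $2\sin(\pi c/m')\ge 2\sin(\pi/m')\ge 2\sin(\pi/m)$ chain, which is a welcome elaboration rather than a departure in method.
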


\begin{proof}
In this proof, we write for short $m=m_{j^*}$ and $r=r_{j^*}$. We also write $d=\gcd(m,k)$, $m=dm'$ and $k=dk'$. Since $j^* \in \cJ^*_{\vk,\ell}$, we have $\lambda^*_{m,r}(h,k)=\lambda^*_{m,r}(\vk,\ell)=0$. Hence $d$ divides $rh$ and $\lambda_{m,r}(h,k)=rh/d$. We know from \eqref{eq:gamma-mix} that
\begin{align*}
&r\tau\gamma^*_{(m,h,k)}(m\tau)+\lambda_{m,r}(h,k)\gamma_{(m,h,k)}(m\tau)\\
&\quad=\frac{r d}{mk}+\lambda_{m,r}(h,k)\frac{\hbar_m(h,k) d}{k}+\lambda^*_{m,r}(h,k)\frac{d^2}{mkz}i\\
&\quad=\frac{r d}{mk}+\lambda_{m,r}(h,k)\frac{\hbar_m(h,k) d}{k}\\
&\quad=\frac{r d}{mk}+\frac{rh}{d}\frac{\hbar_m(h,k) d}{k}\\
&\quad=\frac{r(1+\hbar_m(h,k)m'h)}{m'k}\\
&\quad=\frac{b_{m'}r}{m}=\frac{b_{m'}}{m'}\frac{r}{d},
\end{align*}
where as in Section \ref{sec:Farey}, we have put $b_{m'}=(\hbar_m(h,k) m' h+1)/k'$. Hence it is a real number.

Notice that $d=\gcd(m,k)$. Since $\gcd(h,k)=1$, $d\mid rh$ implies that $d\mid r$. Further, $b_{m'}=(\hbar_m(h,k) m' h+1)/k'$ implies that $\gcd(m',b_{m'})=1$. Hence, if $\frac{b_{m'}}{m'}\frac{r}{d}$ is an integer, then $m'\mid \frac{r}{d}$ so that $m=dm'\mid r$. This violates the assumption that $1\le r\le m-1$. Hence $r\tau\gamma^*_{(m,h,k)}(m\tau)+\lambda_{m,r}(h,k)\gamma_{(m,h,k)}(m\tau)$ is not an integer and \eqref{eq:ineq-important} follows immediately.
\end{proof}

\begin{remark}\label{rmk:Pi-value}
Recall that $\hbar_m(h,k)$ is defined to be an integer such that
$$\hbar_m(h,k) \frac{m h}{\gcd(m,k)}\equiv -1 \pmod{\frac{k}{\gcd(m,k)}}.$$
Let $n$ be an integer. It turns out that
\begin{align*}
&\exp\Bigg(2\pi i\frac{r_{j^*}\gcd(m_{j^*},k)+r_{j^*}\Big(\hbar_{m_{j^*}}(h,k)+n\frac{k}{\gcd(m_{j^*},k)}\Big)m_{j^*}h}{m_{j^*}k}\Bigg)\\
&\quad=\exp\Bigg(2\pi i\frac{r_{j^*}\gcd(m_{j^*},k)+r_{j^*}\hbar_{m_{j^*}}(h,k)m_{j^*}h}{m_{j^*}k}+2n\pi i\frac{r_{j^*}h}{\gcd(m_{j^*},k)}\Bigg)\\
&\quad=\exp\Bigg(2\pi i\frac{r_{j^*}\gcd(m_{j^*},k)+r_{j^*}\hbar_{m_{j^*}}(h,k)m_{j^*}h}{m_{j^*}k}\Bigg),
\end{align*}
since from the above proof we have $\gcd(m_{j^*},k)\mid r_{j^*}$. Hence the choice of $\hbar_{m_{j^*}}(h,k)$ does not affect the value of
\begin{align*}
&\exp\Bigg(2\pi i\bigg(r_{j^*}\tau\gamma^*_{(m_{j^*},h,k)}(m_{j^*}\tau)+\lambda_{m_{j^*},r_{j^*}}(h,k)\gamma_{(m_{j^*},h,k)}(m_{j^*}\tau)\bigg)\Bigg)\\
&\quad=\exp\Bigg(2\pi i\frac{r_{j^*}\gcd(m_{j^*},k)+r_{j^*}\hbar_{m_{j^*}}(h,k)m_{j^*}h}{m_{j^*}k}\Bigg).
\end{align*}
\end{remark}

\section{Minor arcs}\label{sec:minor}

Let $(\vk,\ell)\in\mathcal{L}_{\le 0}$, namely, $\Delta(\vk,\ell)\le 0$. We write $\cJ^*=\cJ^*_{\vk,\ell}$ and $\cJ^{**}=\cJ^{**}_{\vk,\ell}$. Notice that
\begin{align*}
|S_{\vk,\ell}|&\le \sum_{\substack{1\le k\le N\\k \equiv \ell \bmod{L}}} \sum_{\substack{0\le h< k\\ \gcd(h,k)=1\\ h\equiv \vk \bmod{\ell}}} \int_{\xi_{h,k}} \exp\Bigg(\frac{\pi}{12k}(\Omega \Re(z)+\Delta(\vk,\ell)\Re(z^{-1}))\Bigg)\\
&\times \Bigg|\prod_{j=1}^J \rZh^{\delta_j}\big(r_j\tau\gamma^*_{(m_j,h,k)}(m_j\tau)+\lambda_{m_j,r_j}(h,k)\gamma_{(m_j,h,k)}(m_j\tau);\gamma_{(m_j,h,k)}(m_j\tau)\big)\Bigg|\\
&\times e^{2 \pi n \varrho}\ d\phi.
\end{align*}

We now consider the Farey arcs with respect to $h/k$ with $k\equiv \ell \pmod{L}$ and $h\equiv \vk \pmod{\ell}$. Since $\Delta(\vk,\ell)\le 0$, it follows from \eqref{eq:Re-z-bound} and \eqref{eq:Re-1-z-bound} that
\begin{align*}
\exp\Bigg(\frac{\pi}{12k}(\Omega \Re(z)+\Delta(\vk,\ell)\Re(z^{-1}))\Bigg)&\le \exp\Bigg(\frac{\pi}{12k}\Big(\Omega \frac{k}{N^2}+\Delta(\vk,\ell)\frac{k}{2}\Big)\Bigg)\\
&=\exp\Bigg(\frac{\pi \varrho\; \Omega}{12}\Bigg)\exp\Bigg(\frac{\pi \Delta(\vk,\ell)}{24}\Bigg).
\end{align*}

For convenience, now we write $\lambda_j=\lambda_{m_j,r_j}(h,k)$ and $\lambda^*_j=\lambda^*_{m_j,r_j}(h,k)$. We also write for short $\tilde{\vs}_j=r_j\tau\gamma^*_{(m_j,h,k)}(m_j\tau)+\lambda_{m_j,r_j}(h,k)\gamma_{(m_j,h,k)}(m_j\tau)$ and $\tilde{\tau}_j=\gamma_{(m_j,h,k)}(m_j\tau)$. We know from \eqref{eq:gamma} and \eqref{eq:gamma-mix} that
$$\Im(\tilde{\tau}_j)=\frac{\gcd^2(m_j,k)}{m_jk}\Re(z^{-1})=\frac{\gcd^2(m_j,\ell)}{m_jk}\Re(z^{-1})$$
and
$$\Im(\tilde{\vs}_j)=\lambda^*_j\frac{\gcd^2(m_j,k)}{m_jk}\Re(z^{-1})=\lambda^*_j\frac{\gcd^2(m_j,\ell)}{m_jk}\Re(z^{-1}).$$
Notice that
$$0\le \Im(\tilde{\vs}_j)<\Im(\tilde{\tau}_j).$$
We write
\begin{align*}
\prod_{j=1}^J \rZh^{\delta_j}(\tilde{\vs}_j;\tilde{\tau}_j)&=\prod_{j^*\in\cJ^*}(1-e^{2\pi i \tilde{\vs}_{j^*}})^{\delta_{j^*}}\\
&\quad\times \prod_{j^*\in\cJ^*}(e^{2\pi i (\tilde{\tau}_{j^*}+\tilde{\vs}_{j^*})},e^{2\pi i (\tilde{\tau}_{j^*}-\tilde{\vs}_{j^*})};e^{2\pi i \tilde{\tau}_{j^*}})_\infty^{\delta_{j^*}}\\
&\quad\times \prod_{j^{**}\in\cJ^{**}}(e^{2\pi i \tilde{\vs}_{j^{**}}},e^{2\pi i (\tilde{\tau}_{j^{**}}-\tilde{\vs}_{j^{**}})};e^{2\pi i \tilde{\tau}_{j^{**}}})_\infty^{\delta_{j^{**}}}.
\end{align*}
First, it follows from Proposition \ref{lem:pre} that
$$\prod_{j^*\in\cJ^*}(1-e^{2\pi i \tilde{\vs}_{j^*}})^{\delta_{j^*}}\ll 1.$$
Further, as we have seen in Section \ref{sec:ptn-res}, for $j^*\in\cJ^*$ (hence $\lambda^*_{j^*}=0$),
\begin{align*}
&\Big|(e^{2\pi i (\tilde{\tau}_{j^*}+\tilde{\vs}_{j^*})},e^{2\pi i (\tilde{\tau}_{j^*}-\tilde{\vs}_{j^*})};e^{2\pi i \tilde{\tau}_{j^*}})_\infty^{\delta_{j^*}}\Big|\\
& \le \sum_{n\ge 0}\sum_{s\ge 0}\sum_{t\ge 0}p^*_{|\delta_{j^*}|}(s,t;n)|e^{2\pi i (\tilde{\tau}_{j^*}+\tilde{\vs}_{j^*})}|^s |e^{2\pi i (\tilde{\tau}_{j^*}-\tilde{\vs}_{j^*})}|^t |e^{2\pi i \tilde{\tau}_{j^*}}|^n\\
&=\sum_{n\ge 0}\sum_{s\ge 0}\sum_{t\ge 0}p^*_{|\delta_{j^*}|}(s,t;n) e^{-2\pi \Im(\tilde{\tau}_{j^*}+\tilde{\vs}_{j^*})s}e^{-2\pi \Im(\tilde{\tau}_{j^*}-\tilde{\vs}_{j^*})t}e^{-2\pi \Im(\tilde{\tau}_{j^*})n}\\
&=\sum_{n\ge 0}\sum_{s\ge 0}\sum_{t\ge 0}p^*_{|\delta_{j^*}|}(s,t;n) \exp\bigg(-2\pi \frac{\gcd^2(m_{j^*},\ell)}{m_{j^*}k}\Re(z^{-1})s\bigg)\\
&\quad\times\exp\bigg(-2\pi \frac{\gcd^2(m_{j^*},\ell)}{m_{j^*}k}\Re(z^{-1})t\bigg)\exp\bigg(-2\pi \frac{\gcd^2(m_{j^*},\ell)}{m_{j^*}k}\Re(z^{-1})n\bigg)\\
&\le \sum_{n\ge 0}\sum_{s\ge 0}\sum_{t\ge 0}p^*_{|\delta_{j^*}|}(s,t;n) \exp\bigg(-\pi \frac{\gcd^2(m_{j^*},\ell)}{m_{j^*}}s\bigg)\\
&\quad\times\exp\bigg(-\pi \frac{\gcd^2(m_{j^*},\ell)}{m_{j^*}}t\bigg)\exp\bigg(-\pi \frac{\gcd^2(m_{j^*},\ell)}{m_{j^*}}n\bigg),
\end{align*}
where we use $\Re(z^{-1})\ge k/2$. It follows from \eqref{eq:some-bound} that
$$(e^{2\pi i (\tilde{\tau}_{j^*}+\tilde{\vs}_{j^*})},e^{2\pi i (\tilde{\tau}_{j^*}-\tilde{\vs}_{j^*})};e^{2\pi i \tilde{\tau}_{j^*}})_\infty^{\delta_{j^*}}\ll 1.$$
Likewise, for $j^{**}\in\cJ^{**}$,
\begin{align*}
&\Big|(e^{2\pi i \tilde{\vs}_{j^{**}}},e^{2\pi i (\tilde{\tau}_{j^{**}}-\tilde{\vs}_{j^{**}})};e^{2\pi i \tilde{\tau}_{j^{**}}})_\infty^{\delta_{j^{**}}}\Big|\\
&\quad\le \sum_{n\ge 0}\sum_{s\ge 0}\sum_{t\ge 0}p^*_{|\delta_{j^{**}}|}(s,t;n) \exp\bigg(-\pi \lambda^*_{j^{**}}\frac{\gcd^2(m_{j^{**}},\ell)}{m_{j^{**}}}s\bigg)\\
&\quad\quad\times\exp\bigg(-\pi (1-\lambda^*_{j^{**}})\frac{\gcd^2(m_{j^{**}},\ell)}{m_{j^{**}}}t\bigg)\exp\bigg(-\pi \frac{\gcd^2(m_{j^{**}},\ell)}{m_{j^{**}}}n\bigg)\\
&\quad\ll 1.
\end{align*}

Hence,
\begin{align*}
S_{\vk,\ell}&\ll \sum_{\substack{1\le k\le N\\k \equiv \ell \bmod{L}}} \sum_{\substack{0\le h< k\\ \gcd(h,k)=1\\ h\equiv \vk \bmod{\ell}}} \int_{\xi_{h,k}}  e^{\frac{\pi \varrho\; \Omega}{12}} e^{2 \pi n \varrho}\ d\phi\\
&\ll \sum_{\substack{1\le k\le N\\k \equiv \ell \bmod{L}}} \sum_{\substack{0\le h< k\\ \gcd(h,k)=1\\ h\equiv \vk \bmod{\ell}}}e^{2\pi \varrho(n+\frac{\Omega}{24})} \frac{1}{kN}\\
&\ll e^{2\pi \varrho(n+\frac{\Omega}{24})}=e^{\frac{2\pi}{N^2} (n+\frac{\Omega}{24})}.
\end{align*}

\section{Major arcs}\label{sec:major}

Let $(\vk,\ell)\in\mathcal{L}_{> 0}$, namely, $\Delta(\vk,\ell)> 0$. Again, we write $\cJ^*=\cJ^*_{\vk,\ell}$ and $\cJ^{**}=\cJ^{**}_{\vk,\ell}$. Let us consider the Farey arcs with respect to $h/k$ with $k\equiv \ell \pmod{L}$ and $h\equiv \vk \pmod{\ell}$. For convenience, we write $\tilde{\vs}_j(h,k)=r_j\tau\gamma^*_{(m_j,h,k)}(m_j\tau)+\lambda_{m_j,r_j}(h,k)\gamma_{(m_j,h,k)}(m_j\tau)$ and $\tilde{\tau}_j(h,k)=\gamma_{(m_j,h,k)}(m_j\tau)$.

\smallskip

Recall that
\begin{align*}
S_{\vk,\ell}&=\sum_{\substack{1\le k\le N\\k \equiv \ell \bmod{L}}} \sum_{\substack{0\le h< k\\ \gcd(h,k)=1\\ h\equiv \vk \bmod{\ell}}} e^{-\frac{2\pi i nh}{k}}(-1)^{\sum_{j=1}^{J}\delta_j \lambda_{m_j,r_j}(h,k)}\omega_{h,k}^2\;\rD_{h,k}\\
&\times \int_{\xi_{h,k}} \exp\Bigg(\frac{\pi}{12k}(\Omega z+\Delta(\vk,\ell)z^{-1})\Bigg) \prod_{j=1}^J \rZh^{\delta_j}\big(\tilde{\vs}_j(h,k);\tilde{\tau}_j(h,k)\big)\\
&\times e^{-2\pi i n \phi} e^{2 \pi n \varrho}\ d\phi.
\end{align*}
We split $S_{\vk,\ell}$ into two parts $\Sigma_1$ and $\Sigma_2$ where
\begin{align*}
\Sigma_1&:=\sum_{\substack{1\le k\le N\\k \equiv \ell \bmod{L}}} \sum_{\substack{0\le h< k\\ \gcd(h,k)=1\\ h\equiv \vk \bmod{\ell}}} e^{-\frac{2\pi i nh}{k}}(-1)^{\sum_{j=1}^{J}\delta_j \lambda_{m_j,r_j}(h,k)}\omega_{h,k}^2\;\rD_{h,k}\\
&\;\times \int_{\xi_{h,k}} \exp\Bigg(\frac{\pi}{12k}(\Omega z+\Delta(\vk,\ell)z^{-1})\Bigg) \Pi_{h,k} e^{-2\pi i n \phi} e^{2 \pi n \varrho}\ d\phi
\end{align*}
and
\begin{align*}
\Sigma_2&:=\sum_{\substack{1\le k\le N\\k \equiv \ell \bmod{L}}} \sum_{\substack{0\le h< k\\ \gcd(h,k)=1\\ h\equiv \vk \bmod{\ell}}} e^{-\frac{2\pi i nh}{k}}(-1)^{\sum_{j=1}^{J}\delta_j \lambda_{m_j,r_j}(h,k)}\omega_{h,k}^2\;\rD_{h,k}\\
&\;\times \int_{\xi_{h,k}} \exp\Bigg(\frac{\pi}{12k}(\Omega z+\Delta(\vk,\ell)z^{-1})\Bigg) \Bigg(\prod_{j=1}^J \rZh^{\delta_j}\big(\tilde{\vs}_j(h,k);\tilde{\tau}_j(h,k)\big)-\Pi_{h,k}\Bigg)\\
&\;\times e^{-2\pi i n \phi} e^{2 \pi n \varrho}\ d\phi.
\end{align*}

We first show that $\Sigma_2$ is negligible. Notice that by \eqref{eq:Re-z-bound}
\begin{align*}
|\Sigma_2|&\le\sum_{\substack{1\le k\le N\\k \equiv \ell \bmod{L}}} \sum_{\substack{0\le h< k\\ \gcd(h,k)=1\\ h\equiv \vk \bmod{\ell}}} e^{2\pi \varrho(n+\frac{\Omega}{24})}|\Pi_{h,k}|\\
&\times \int_{\xi_{h,k}} \exp\Bigg(\frac{\pi\Delta(\vk,\ell)}{12k}\Re(z^{-1})\Bigg) \Bigg|\frac{1}{\Pi_{h,k}}\prod_{j=1}^J \rZh^{\delta_j}\big(\tilde{\vs}_j(h,k);\tilde{\tau}_j(h,k)\big)-1\Bigg|\ d\phi.
\end{align*}
Let us fix $h$ and $k$ and write $\tilde{\vs}_j=\tilde{\vs}_j(h,k)$ and $\tilde{\tau}_j=\tilde{\tau}_j(h,k)$. We also write $\lambda^*_j=\lambda^*_{m_j,r_j}(h,k)$. Recalling the definition of $\Pi_{h,k}$ and Proposition \ref{lem:pre}, we have
\begin{align*}
\frac{1}{\Pi_{h,k}}\prod_{j=1}^J \rZh^{\delta_j}\big(\tilde{\vs}_j;\tilde{\tau}_j\big)-1&=\prod_{j^*\in\cJ^*}(e^{2\pi i (\tilde{\tau}_{j^*}+\tilde{\vs}_{j^*})},e^{2\pi i (\tilde{\tau}_{j^*}-\tilde{\vs}_{j^*})};e^{2\pi i \tilde{\tau}_{j^*}})_\infty^{\delta_{j^*}}\\
&\quad\times \prod_{j^{**}\in\cJ^{**}}(e^{2\pi i \tilde{\vs}_{j^{**}}},e^{2\pi i (\tilde{\tau}_{j^{**}}-\tilde{\vs}_{j^{**}})};e^{2\pi i \tilde{\tau}_{j^{**}}})_\infty^{\delta_{j^{**}}}-1.
\end{align*}
Let us write for short
$$\tilde{\vs}_j^{\textrm{New}}=\begin{cases}
\tilde{\tau}_{j}+\tilde{\vs}_{j} & \text{if $j\in\cJ^*$},\\
\tilde{\vs}_{j} & \text{if $j\in\cJ^{**}$}.
\end{cases}$$
It follows again from \eqref{eq:gamma} and \eqref{eq:gamma-mix} that
$$\Im(\tilde{\tau}_j)=\frac{\gcd^2(m_j,\ell)}{m_jk}\Re(z^{-1}),$$
$$\Im(\tilde{\vs}_j)=\lambda^*_j\frac{\gcd^2(m_j,\ell)}{m_jk}\Re(z^{-1})$$
and
$$\Im(\tilde{\vs}_j^{\textrm{New}})=\Phi(\lambda^*_j)\frac{\gcd^2(m_j,\ell)}{m_jk}\Re(z^{-1}),$$
where for real $0\le x<1$,
$$\Phi(x):=\begin{cases}
1 & \text{if $x=0$},\\
x & \text{otherwise}.
\end{cases}$$
We have
\begin{align*}
&\Bigg|\frac{1}{\Pi_{h,k}}\prod_{j=1}^J \rZh^{\delta_j}\big(\tilde{\vs}_j;\tilde{\tau}_j\big)-1\Bigg|\\
&\quad=\Bigg|\prod_{j=1}^J(e^{2\pi i \tilde{\vs}_j^{\textrm{New}}},e^{2\pi i (\tilde{\tau}_{j}-\tilde{\vs}_{j})};e^{2\pi i \tilde{\tau}_{j}})_\infty^{\delta_{j}}-1\Bigg|\\
&\quad\le \sum_{\mathbf{n}:=(n_1,\ldots,n_J)\in\mathbb{Z}_{\ge 0}^J}\sum_{\mathbf{s}:=(s_1,\ldots,s_J)\in\mathbb{Z}_{\ge 0}^J}\sum_{\mathbf{t}:=(t_1,\ldots,t_J)\in\mathbb{Z}_{\ge 0}^J}\\
&\quad\quad\prod_{j=1}^J p^*_{|\delta_{j}|}(s_j,t_j;n_j)|e^{2\pi i \tilde{\vs}_j^{\textrm{New}}}|^{s_j} |e^{2\pi i (\tilde{\tau}_{j}-\tilde{\vs}_{j})}|^{t_j} |e^{2\pi i \tilde{\tau}_{j}}|^{n_j}-1\\
&\quad=\underset{\mathbf{n}\times\mathbf{s}\times\mathbf{t}\in (\mathbb{Z}_{\ge 0}^J)^3\backslash (0,\ldots,0)^3}{\sum\sum\sum}\prod_{j=1}^J p^*_{|\delta_{j}|}(s_j,t_j;n_j)|e^{2\pi i \tilde{\vs}_j^{\textrm{New}}}|^{s_j} |e^{2\pi i (\tilde{\tau}_{j}-\tilde{\vs}_{j})}|^{t_j} |e^{2\pi i \tilde{\tau}_{j}}|^{n_j}\\
&\quad=\underset{\mathbf{n}\times\mathbf{s}\times\mathbf{t}\in(\mathbb{Z}_{\ge 0}^J)^3\backslash (0,\ldots,0)^3}{\sum\sum\sum}\prod_{j=1}^J p^*_{|\delta_{j}|}(s_j,t_j;n_j) e^{-2\pi \Im(\tilde{\vs}_j^{\textrm{New}})s_j} e^{-2\pi \Im(\tilde{\tau}_{j}-\tilde{\vs}_{j})t_j} e^{-2\pi \Im(\tilde{\tau}_{j})n_j}\\
&\quad=\underset{\mathbf{n}\times\mathbf{s}\times\mathbf{t}\in(\mathbb{Z}_{\ge 0}^J)^3\backslash (0,\ldots,0)^3}{\sum\sum\sum}\Bigg(\prod_{j=1}^J p^*_{|\delta_{j}|}(s_j,t_j;n_j)\Bigg) \\
&\quad\quad\times \exp\Bigg(-2\pi\frac{\Re(z^{-1})}{k}\sum_{j=1}^J\frac{\gcd^2(m_j,\ell)}{m_j}\big(\Phi(\lambda^*_j)s_j+(1-\lambda^*_j)t_j+n_j\big)\Bigg).
\end{align*}
Hence,
\begin{align*}
&\exp\Bigg(\frac{\pi\Delta(\vk,\ell)}{12k}\Re(z^{-1})\Bigg)\Bigg|\frac{1}{\Pi_{h,k}}\prod_{j=1}^J \rZh^{\delta_j}\big(\tilde{\vs}_j;\tilde{\tau}_j\big)-1\Bigg|\\
&\le \underset{\mathbf{n}\times\mathbf{s}\times\mathbf{t}\in(\mathbb{Z}_{\ge 0}^J)^3\backslash (0,\ldots,0)^3}{\sum\sum\sum}\Bigg(\prod_{j=1}^J p^*_{|\delta_{j}|}(s_j,t_j;n_j)\Bigg) \\
&\times \exp\Bigg(\!\!-2\pi\frac{\Re(z^{-1})}{k}\bigg(\!\!-\frac{\Delta(\vk,\ell)}{24}+\sum_{j=1}^J\frac{\gcd^2(m_j,\ell)}{m_j}\big(\Phi(\lambda^*_j)s_j+(1-\lambda^*_j)t_j+n_j\big)\bigg)\!\Bigg).
\end{align*}
Since at least one coordinate of $\mathbf{n}\times\mathbf{s}\times\mathbf{t}$ is nonzero, under the condition \eqref{eq:condition-1}, we know that
\begin{align*}
&-\frac{\Delta(\vk,\ell)}{24}+\sum_{j=1}^J\frac{\gcd^2(m_j,\ell)}{m_j}\big(\Phi(\lambda^*_j)s_j+(1-\lambda^*_j)t_j+n_j\big)\\
&\qquad\ge -\frac{\Delta(\vk,\ell)}{24}+\min_{1\le j\le J}\left(\Upsilon(\lambda^*_j)\frac{\gcd^2(m_j,\ell)}{m_j}\right)\ge 0
\end{align*}
for all $\mathbf{n}\times\mathbf{s}\times\mathbf{t}\in(\mathbb{Z}_{\ge 0}^J)^3\backslash (0,\ldots,0)^3$. Recalling that $\Re(z^{-1})\ge k/2$, it follows that
$$\exp\Bigg(\frac{\pi\Delta(\vk,\ell)}{12k}\Re(z^{-1})\Bigg)\Bigg|\frac{1}{\Pi_{h,k}}\prod_{j=1}^J \rZh^{\delta_j}\big(\tilde{\vs}_j;\tilde{\tau}_j\big)-1\Bigg|$$
is maximized when $\Re(z^{-1})=k/2$. Namely,
\begin{align*}
&\exp\Bigg(\frac{\pi\Delta(\vk,\ell)}{12k}\Re(z^{-1})\Bigg)\Bigg|\frac{1}{\Pi_{h,k}}\prod_{j=1}^J \rZh^{\delta_j}\big(\tilde{\vs}_j;\tilde{\tau}_j\big)-1\Bigg|\\
&\quad\le \exp\Bigg(\frac{\pi\Delta(\vk,\ell)}{24}\Bigg)\underset{\mathbf{n}\times\mathbf{s}\times\mathbf{t}\in(\mathbb{Z}_{\ge 0}^J)^3\backslash (0,\ldots,0)^3}{\sum\sum\sum}\Bigg(\prod_{j=1}^J p^*_{|\delta_{j}|}(s_j,t_j;n_j)\Bigg) \\
&\quad\quad\times \exp\Bigg(-\pi\sum_{j=1}^J\frac{\gcd^2(m_j,\ell)}{m_j}\big(\Phi(\lambda^*_j)s_j+(1-\lambda^*_j)t_j+n_j\big)\Bigg)\\
&\quad\ll 1.
\end{align*}
Together with the fact $\prod_{h,k}\ll 1$ which follows from \eqref{eq:ineq-important}, we conclude that
\begin{align*}
\Sigma_2&\ll\sum_{\substack{1\le k\le N\\k \equiv \ell \bmod{L}}} \sum_{\substack{0\le h< k\\ \gcd(h,k)=1\\ h\equiv \vk \bmod{\ell}}} e^{2\pi \varrho(n+\frac{\Omega}{24})} \int_{\xi_{h,k}} 1\ d\phi\\
&\ll \sum_{\substack{1\le k\le N\\k \equiv \ell \bmod{L}}} \sum_{\substack{0\le h< k\\ \gcd(h,k)=1\\ h\equiv \vk \bmod{\ell}}}e^{2\pi \varrho(n+\frac{\Omega}{24})} \frac{1}{kN}\\
&\ll e^{2\pi \varrho(n+\frac{\Omega}{24})}=e^{\frac{2\pi}{N^2} (n+\frac{\Omega}{24})}.
\end{align*}

Finally, we estimate the main contribution $\Sigma_1$. To do so, we need the following evaluation of an integral, which is a special case of Lemma 2.4 in \cite{Che2019}. For the sake of completeness, we sketch a brief proof.

\begin{lemma}\label{le:key-int}
Let $a\in \mathbb{R}_{>0}$ and $b\in\mathbb{R}$. Let $\gcd(h,k)=1$. Define
\begin{equation}
I:=\int_{\xi_{h,k}}e^{\frac{\pi}{12k}\left(\frac{a}{z}+bz\right)}e^{-2\pi i n\phi}e^{2\pi n \varrho}\ d\phi.
\end{equation}
Then for positive integers $n$ with $n> -b/24$, we have
\begin{equation}
I=\frac{2\pi}{k} \left(\frac{24n+b}{a}\right)^{-\frac{1}{2}} I_{-1}\left(\frac{\pi }{6k}\sqrt{a(24n+b)}\right)+E(I),
\end{equation}
where
\begin{equation}\label{eq:integral-error}
E(I)\ll_a \frac{e^{2\pi\varrho \left(n+\frac{b}{24}\right)}}{n+\frac{b}{24}}.
\end{equation}
\end{lemma}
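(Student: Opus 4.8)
\textbf{Proof proposal for Lemma \ref{le:key-int}.}

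The plan is to follow the standard Rademacher circle-method evaluation of this kind of arc integral. First I would complete the contour: the integral $I$ is taken over the Farey arc $\xi_{h,k}=[-\theta'_{h,k},\theta''_{h,k}]$ in the variable $\phi$, which corresponds in the variable $z=k(\varrho-i\phi)$ to a vertical segment inside the half-plane $\Re(z)=k\varrho=k/N^2$. The idea is to close this segment into a small loop by adjoining the two horizontal pieces that connect its endpoints to the principal Hankel-type contour around the origin. Concretely, after the substitution, $I=\frac{1}{2\pi i}\int_{\Gamma} e^{\frac{\pi}{12k}(a/z + (b+24n)z/1)\cdot(\text{after absorbing }e^{2\pi n\varrho}e^{-2\pi in\phi})}\,\frac{dz}{z}$ up to constants; the exponent becomes $\frac{\pi}{12k}\big(\tfrac{a}{z}+(24n+b)z\big)$ once $e^{2\pi n\varrho}e^{-2\pi i n\phi}=e^{2\pi n z/k}$ is folded in. So $I$ is, up to the factor $\tfrac{1}{k}$ coming from $d\phi = \tfrac{i}{k}\,dz$, a truncated Hankel integral for the Bessel function.

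Second, I would recall the Hankel loop integral representation
$$I_{s}(x)=\frac{1}{2\pi i}\left(\frac{x}{2}\right)^{s}\int_{\mathcal{H}} t^{-s-1}e^{t+\frac{x^2}{4t}}\,dt,$$
where $\mathcal{H}$ is a contour coming from $-\infty$, encircling the origin counter-clockwise, and returning to $-\infty$. After the change of variable $t = \frac{\pi}{12k}(24n+b)z$ (so that $\frac{x^2}{4t}$ matches $\frac{\pi a}{12kz}$ with $x = \frac{\pi}{6k}\sqrt{a(24n+b)}$), the main vertical segment of $\Gamma$ together with its horizontal extensions to $\Re(z)\to -\infty$ reproduces exactly $\mathcal{H}$ with $s=-1$; this yields the stated main term $\frac{2\pi}{k}\big(\frac{24n+b}{a}\big)^{-1/2}I_{-1}\big(\frac{\pi}{6k}\sqrt{a(24n+b)}\big)$. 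The error $E(I)$ is then precisely the contribution of the two horizontal segments along which we extended the contour, i.e. the pieces with $\Re(z)$ running from $k/N^2$ (or the endpoint abscissae determined by $\theta',\theta''$) out to $-\infty$ at fixed imaginary parts of size $O(1/(kN))$.

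Third, I would bound $E(I)$ on those horizontal tails. On such a segment $z = u + iv$ with $v$ fixed of magnitude between roughly $\tfrac{1}{2kN}$ and $\tfrac{1}{kN}$ and $u\le k/N^2$, one has $\Re(1/z) = u/(u^2+v^2)$ which is bounded above, and $\Re(z)=u$ contributes $e^{\frac{\pi}{12k}(24n+b)u}$; the worst case is near the junction $u\asymp k/N^2$, giving the factor $e^{2\pi\varrho(n+b/24)}$, while the $1/z$ and $1/z$-in-the-integrand factors together with $|dz|$ produce a factor $O_a(1/(n+b/24))$ after integrating the rapidly decaying tail (the length element is controlled using $|z|\gtrsim 1/(kN)$ and the exponential decay in $u\to-\infty$ of $e^{\frac{\pi a}{12k}\Re(1/z)}$ — wait, on the tail $\Re(1/z)\to 0^-$, so it is rather the $e^{\frac{\pi}{12k}(24n+b)u}\to 0$ as $u\to -\infty$ that gives convergence, and integrating $\int_{-\infty}^{k/N^2} e^{\frac{\pi}{12k}(24n+b)u}\,du = \frac{12k}{\pi(24n+b)}e^{2\pi\varrho(n+b/24)}$). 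Combined with the prefactor $1/|z|\lesssim kN$ on the arc near the junction — more carefully, one splits the tail into the part with $|z|\ge 1$, handled trivially, and the part with $1/(kN)\lesssim |z|\le 1$ — this delivers $E(I)\ll_a e^{2\pi\varrho(n+b/24)}/(n+b/24)$, which is \eqref{eq:integral-error}.

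The main obstacle is the third step: organizing the tail estimate so that the bound comes out uniformly in $k$ and $N$ with the clean shape $e^{2\pi\varrho(n+b/24)}/(n+b/24)$, in particular tracking where the $\varrho = 1/N^2$ and the $n$-dependence enter and making sure the dependence on $a$ (but not on $b$) is the only one that survives in the implied constant. Since this is exactly Lemma 2.4 of \cite{Che2019} specialized to the present parameters, I would at this point simply invoke that reference after indicating the substitution $t=\frac{\pi}{12k}(24n+b)z$ that reduces the integral to the normalized Hankel form, and note that the parameters there are $(a,b)\mapsto(a,24n+b)$ with the extra $e^{2\pi n\varrho}$ absorbed.
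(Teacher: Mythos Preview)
Your approach is essentially the paper's: substitute so the exponent becomes $\frac{\pi}{12k}\big(\tfrac{a}{z}+(24n+b)z\big)$, extend the vertical segment to a Hankel contour to extract the $I_{-1}$ main term, and bound the two horizontal tails by integrating $e^{2\pi(n+b/24)u}$ from $-\infty$ up to $\varrho$ after showing $\big|e^{\pi a/(12k^2 w)}\big|\le e^{\pi a/3}$ uniformly via the Farey bound $|\theta|\ge 1/(2kN)$ (this is precisely where the $\ll_a$ enters, and is the point you flag as the ``main obstacle''). One slip to clean up: there is no $dz/z$ or $1/|z|$ prefactor, since for $s=-1$ the Bessel integrand has $t^{-s-1}=1$; the paper works in $w=z/k$ and uses the change of variable $t=wk\sqrt{(24n+b)/a}$ directly into $I_{-1}(x)=\frac{1}{2\pi i}\int_\Gamma e^{\frac{x}{2}(t+1/t)}\,dt$.
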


\begin{proof}
Putting $w=z/k=\varrho-i\phi$ and reversing the integral order, one has
$$I=\frac{1}{2\pi i}\int_{\varrho-i\theta''_{h,k}}^{\varrho+i\theta'_{h,k}} 2\pi e^{\frac{\pi a}{12k^2 w}} e^{2\pi w \left(n+\frac{b}{24}\right)}\ dw.$$
We now split the integral into three parts:
\begin{align*}
I&=\frac{1}{2\pi i}\left(\int_\Gamma-\int_{-\infty-i\theta''_{h,k}}^{\varrho-i\theta''_{h,k}}+\int_{-\infty+i\theta'_{h,k}}^{\varrho+i\theta'_{h,k}}\right) 2\pi e^{\frac{\pi a}{12k^2 w}} e^{2\pi w \left(n+\frac{b}{24}\right)}\ dw\\
&=:J_1-J_2+J_3,
\end{align*}
where
\begin{align*}
\Gamma:=(-\infty-i\theta''_{h,k}) \to (\varrho-i\theta''_{h,k}) \to (\varrho+i\theta'_{h,k}) \to (-\infty+i\theta'_{h,k})
\end{align*}
is a Hankel contour.

\smallskip

The dominant contribution to $I$ comes from $J_1$. We make the following change of variables $t=wk\sqrt{(24n+b)/a}$. Then
$$J_1=\frac{2\pi}{k}\left(\frac{24n+b}{a}\right)^{-\frac{1}{2}} \frac{1}{2\pi i}\int_{\tilde{\Gamma}} e^{\frac{\pi }{12k}\sqrt{a(24n+b)}\left(t+\frac{1}{t}\right)} dt,$$
in which the new contour $\tilde{\Gamma}$ is still a Hankel contour. Recalling the contour integral representation of $I_s(x)$:
$$I_s(x)=\frac{1}{2\pi i}\int_{\Gamma} t^{-s-1}e^{\frac{x}{2}\left(t+\frac{1}{t}\right)}\ dt\quad\text{($\Gamma$ is a Hankel contour)},$$
we conclude that
$$J_1=\frac{2\pi}{k} \left(\frac{24n+b}{a}\right)^{-\frac{1}{2}} I_{-1}\left(\frac{\pi }{6k}\sqrt{a(24n+b)}\right).$$

\smallskip

We next bound the error term $E(I)$, coming from $J_2$ and $J_3$. Let us put $w=x+i\theta$ with $-\infty\le x\le \varrho$ and $\theta\in\{\theta'_{h,k},-\theta''_{h,k}\}$. We know that
$$\left|e^{2\pi w \left(n+\frac{b}{24}\right)}\right|= e^{2\pi x \left(n+\frac{b}{24}\right)}$$
and
\begin{align*}
\left|e^{\frac{\pi a}{12k^2 w}}\right|&=e^{\frac{\pi a}{12k^2}\Re\left(\frac{1}{w}\right)}=e^{\frac{\pi a}{12k^2}\frac{x}{x^2+\theta^2}}\le e^{\frac{\pi a}{12k^2}\frac{x}{\theta^2}}\le e^{\frac{\pi a}{12k^2}\varrho (2kN)^2}=e^{\frac{\pi a}{3}}\ll_a 1,
\end{align*}
where we use the fact $\frac{1}{2kN}\le |\theta|\le \frac{1}{kN}$. Hence for $j=2$ and $3$, we have
\begin{align*}
|J_j|\ll_a \int_{-\infty}^{\varrho} e^{2\pi x \left(n+\frac{b}{24}\right)}\ dx\ll_a \frac{e^{2\pi\varrho \left(n+\frac{b}{24}\right)}}{n+\frac{b}{24}}.
\end{align*}
This implies that
$$|E(I)|=|-J_2+J_3|\le |J_2|+|J_3|\ll_a \frac{e^{2\pi\varrho \left(n+\frac{b}{24}\right)}}{n+\frac{b}{24}},$$
which gives \eqref{eq:integral-error}.
\end{proof}

Recall that
\begin{align*}
\Sigma_1&=\sum_{\substack{1\le k\le N\\k \equiv \ell \bmod{L}}} \sum_{\substack{0\le h< k\\ \gcd(h,k)=1\\ h\equiv \vk \bmod{\ell}}} e^{-\frac{2\pi i nh}{k}}(-1)^{\sum_{j=1}^{J}\delta_j \lambda_{m_j,r_j}(h,k)}\omega_{h,k}^2\;\rD_{h,k}\Pi_{h,k}\\
&\quad\times \int_{\xi_{h,k}} \exp\Bigg(\frac{\pi}{12k}(\Omega z+\Delta(\vk,\ell)z^{-1})\Bigg) e^{-2\pi i n \phi} e^{2 \pi n \varrho}\ d\phi.
\end{align*}
The main contribution to $\Sigma_1$ is
\begin{align*}
&\sum_{\substack{1\le k\le N\\k \equiv \ell \bmod{L}}} \sum_{\substack{0\le h< k\\ \gcd(h,k)=1\\ h\equiv \vk \bmod{\ell}}} e^{-\frac{2\pi i nh}{k}}(-1)^{\sum_{j=1}^{J}\delta_j \lambda_{m_j,r_j}(h,k)}\omega_{h,k}^2\;\rD_{h,k}\Pi_{h,k}\\
&\times \frac{2\pi}{k} \left(\frac{24n+\Omega}{\Delta(\vk,\ell)}\right)^{-\frac{1}{2}} I_{-1}\left(\frac{\pi }{6k}\sqrt{\Delta(\vk,\ell)(24n+\Omega)}\right).
\end{align*}
The error term in $\Sigma_1$ is bounded by
\begin{align*}
\sum_{\substack{1\le k\le N\\k \equiv \ell \bmod{L}}} \sum_{\substack{0\le h< k\\ \gcd(h,k)=1\\ h\equiv \vk \bmod{\ell}}} \frac{e^{2\pi\varrho \left(n+\frac{\Omega}{24}\right)}}{n+\frac{\Omega}{24}}\ll \frac{N^2 e^{\frac{2\pi}{N^2} \left(n+\frac{\Omega}{24}\right)}}{n+\frac{\Omega}{24}}.
\end{align*}

\subsection*{Acknowledgements}

I want to thank George Andrews for helpful suggestions and for pointing out a number of useful references.

\bibliographystyle{amsplain}

\end{document}